\documentclass[11pt]{amsart}

\usepackage[T1]{fontenc}
\usepackage[utf8]{inputenc}
\usepackage{lmodern}
\usepackage{microtype}
\usepackage{amsmath,amssymb,amsthm,mathtools}
\usepackage{aliascnt}
\usepackage{enumitem}
\usepackage{geometry}
\usepackage{xcolor}
\usepackage{tikz}
\usepackage{hyperref}
\usepackage{comment}
\usepackage[nameinlink,noabbrev]{cleveref}
\usepackage{lineno}

\hypersetup{
  colorlinks=true,
  linkcolor=blue!45!black,
  citecolor=blue!45!black,
  urlcolor=blue!45!black,
  pdftitle={Short Homology Bases For Translation Surfaces},
}

\newtheorem{theorem}{Theorem}[section]

\newaliascnt{proposition}{theorem}
\newtheorem{proposition}[proposition]{Proposition}
\aliascntresetthe{proposition}

\newaliascnt{lemma}{theorem}
\newtheorem{lemma}[lemma]{Lemma}
\aliascntresetthe{lemma}

\newaliascnt{corollary}{theorem}
\newtheorem{corollary}[corollary]{Corollary}
\aliascntresetthe{corollary}

\theoremstyle{definition}
\newaliascnt{definition}{theorem}
\newtheorem{definition}[definition]{Definition}
\aliascntresetthe{definition}

\theoremstyle{remark}
\newaliascnt{remark}{theorem}
\newtheorem{remark}[remark]{Remark}
\aliascntresetthe{remark}

\newaliascnt{question}{theorem}

\aliascntresetthe{question}

\newaliascnt{conjecture}{theorem}

\aliascntresetthe{conjecture}

\newcommand{\Area}{\operatorname{area}}
\newcommand{\dist}{\operatorname{dist}}
\newcommand{\im}{\operatorname{im}}
\newcommand{\inter}{\operatorname{Int}}
\newcommand{\rank}{\operatorname{rank}}
\newcommand{\sys}{\operatorname{sys}}

\newcommand{\N}{\mathbb{N}}

\newcommand{\Z}{\mathbb{Z}}
\newcommand{\R}{\mathbb{R}}
\newcommand{\cH}{\mathcal{H}}
\newcommand{\Gmin}{G_{\min}}

\begin{document}

\title[Short Homology Bases for Translation Surfaces]{Short homology bases for translation surfaces}
\author{Peter Buser, Achintya Dey, Eran Makover and Bjoern Muetzel}
\date{\today}

\begin{abstract}
Let $S$ be a closed translation surface of genus $g\ge 2$ with $\Area(S)= 4\pi g$. We show that for any $\lambda \in (0, 1)$ there exist $\lfloor \lambda \cdot g\rfloor$ homologically independent simple closed curves of length at most $ C(\lambda) \cdot \log(g)$, where $C(\lambda)$ is a constant that depends only on $\lambda$. This result is obtained from a mixed graph construction based on a Voronoi graph of the surface with the cone points as seeds and its dual graph. We also give a complementary result using only the Voronoi graph that produces $2g$ short loops that form a homology basis. This construction, however, depends on the length of a shortest saddle connection of the surface $S$.\\
\\
Keywords: translation surface, Voronoi graph, homology, systolic geometry.\\
Mathematics Subject Classification (2020): 05C10, 30F10, 32G15, 53C22 and 57M15.
\end{abstract}

\maketitle

\section{Introduction}

A \textit{translation surface} $S$ is a connected surface obtained by identifying pairwise by translations the sides of a set of plane polygons. It follows that $S$ is a closed flat surface with singularities or cone points with cone angles $2\pi \cdot (k+1)$ for $k \in \N$ (see \cite{ma},\cite{wr} and \cite{zo} for an overview). As the coordinate changes are translations and therefore holomorphic functions, these surfaces are naturally Riemann surfaces. A saddle connection is a geodesic segment joining cone points, possibly the same cone point, with no cone point in its interior. Important geometric invariants on these surfaces are the length of a shortest saddle connection, the systole, which is the length of a shortest noncontractible loop, and the length spectrum of a minimal homology basis (see \cite{bmm1} for details). The latter provides a bridge between the metric geometry of a surface and the geometry of its Jacobian variety \cite{bps,bs,bmm2, mu}. Via the Jacobian, from the bounds on these curves, a rough fundamental domain of the moduli space can be obtained (see Introduction  of \cite{bmm2}). 
\\
Using different graph or mesh approaches, families of short homologically independent loops on general, hyperbolic and translation surfaces have been obtained in \cite{bps,bmm2,bmm3, ka}. Our work was inspired by Erickson and Whittlesey's use of the cut locus to find short homotopy bases on oriented combinatorial 2-manifolds in \cite{ew}. Here we consider the natural extension of the cut locus of a point, the Voronoi graph or diagram. The carriers for our constructions are the Voronoi graph whose seeds are the cone points of the translation surface and its dual graph. \\
\\
The first main construction, and a central new ingredient of this paper, is the \textit{mixed shorter-edge graph}. Let $G_{VD}$ be the graph obtained by combining the Voronoi graph with its dual graph. For each dual pair of edges $(e,e^*)$ from $G_{VD}$ we keep whichever of $e$ and $e^*$ is shorter. The resulting graph $G_{\min}$ both generates a primitive subgroup of the integral homology and is of controlled total length. This geometric and topological control allows us to obtain the subsequent estimates for short integral homology generators. We recall that a subgroup $U \subseteq H_1(S,\Z)$ is \textit{primitive} if $H_1(S,\Z) / U$ is torsion-free, equivalently, any $\Z$-basis of $U$ can be extended to one of $H_1(S,\Z)$.

\begin{lemma}[Mixed shorter-edge graph] Let $S$ be a translation surface of genus $g \geq 2$. Let $G_{\min}$ be the shorter-edge graph of the Voronoi graph $G_V$ combined with its dual. Then for the total length $L(G_{\min})$ of the edges of this graph we have
\[
 L(G_{\min})\le \sqrt{2N\,\Area(S)},
\]
where $N$ is the number of edges of $G_V$. Furthermore the image of $H_1(G_{\min},\Z)$ in $H_1(S,\Z)$ is a primitive subgroup of rank at least $g$. 
\label{lem:Gmin_intro}
\end{lemma}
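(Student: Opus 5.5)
The proof has two independent parts: a length estimate and a topological statement.

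\emph{Length bound.} For each Voronoi edge $e$ with dual edge $e^*$, I form the quadrilateral $Q_e$ spanned by the two endpoints of $e$ (vertices of $G_V$) and the two endpoints of $e^*$ (cone points, the seeds of the adjacent cells). Here $e^*$ is a geodesic segment perpendicular to $e$ and bisected by the line through $e$. So $Q_e$ is a union of two triangles with common base $e$, or a kite with diagonals $e$ and $e^*$ after developing into the plane. Its area is therefore $\tfrac12\,\ell(e)\ell(e^*)$. The quadrilaterals $Q_e$ have disjoint interiors and tile $S$, since each Voronoi cell is the union of the triangles joining its seed to its boundary edges. This gives
\[
\sum_e \ell(e)\ell(e^*) = 2\Area(S).
\]
Then $\min(\ell(e),\ell(e^*))\le \sqrt{\ell(e)\ell(e^*)}$, and Cauchy--Schwarz over the $N$ edges yields
\[
L(\Gmin)\le \sum_e\sqrt{\ell(e)\ell(e^*)}\le \sqrt{N\sum_e \ell(e)\ell(e^*)}=\sqrt{2N\Area(S)}.
\]
Some care is needed for degenerate cases: self-adjacent cells where both sides of $e$ belong to the same seed, and a dual edge $e^*$ that does not cross $e$ at an interior point. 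These are handled by working in the developed picture on the universal cover, where the area identity holds triangle by triangle.

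\emph{Topological part.} I use the standard duality between a cellular graph and its dual. Let $A\subseteq E$ be the set of Voronoi edges kept in $\Gmin$, and $A^c$ the set whose duals are kept. Then $\Gmin = G_V[A]\cup G_D[(A^c)^*]$. The key fact is the tree--cotree lemma. Take a spanning forest $F$ of $G_V[A]$. The complement $S\setminus G_V[A]$ is the union of the open faces glued along the edges of $A^c$, and it deformation retracts onto $G_D[(A^c)^*]$ (thickened). From this, $S$ is obtained from a regular neighbourhood $U$ of $G_V[A]$ and a regular neighbourhood $W$ of $G_D[(A^c)^*]$ glued along their common boundary. Moreover $U\cup W = S$, $U\simeq G_V[A]$, $W\simeq G_D[(A^c)^*]$, and $U\cap W$ is a disjoint union of circles. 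Mayer--Vietoris then gives
\[
H_1(U\cap W)\to H_1(U)\oplus H_1(W)\to H_1(S)\to \tilde H_0(U\cap W)\to\tilde H_0(U)\oplus\tilde H_0(W).
\]
The image of $H_1(\Gmin)=H_1(U)\oplus H_1(W)$ in $H_1(S)$ is the image of the middle map. Its cokernel injects into $\tilde H_0(U\cap W)$, which is free abelian, so the cokernel is torsion-free. This proves primitivity.

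For the rank, I use intersection pairing and Lagrangians. Let $I_U=\im(H_1(U)\to H_1(S))$ and $I_W=\im(H_1(W)\to H_1(S))$. Since $U$ and $W$ have disjoint interiors after a small push, $I_U$ and $I_W$ pair to zero with themselves where appropriate. More useful is Lefschetz-type duality, which shows $I_W$ contains the annihilator $I_U^{\perp}$. The reasoning is that the kernel of $H_1(S)\to H_1(S,U)\cong H^1(W,\partial W)$ dual statement gives $I_U^{\perp}\subseteq I_W$ up to finite index, and by primitivity the finite index disappears. Hence $\rank(I_U+I_W)\ge \rank I_U + (2g-\rank I_U)-\rank(I_U\cap I_U^\perp)$. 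Since $I_U\cap I_U^\perp$ is isotropic, it has rank at most $g$, which gives rank at least $g$.

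I expect this step to be the main obstacle: making the duality $I_U^\perp\subseteq I_W$ precise. I would first attempt it by representing a class orthogonal to all cycles of $G_V[A]$ as a cocycle on $G_V$ vanishing on $H_1(G_V[A])$. By adjusting with a coboundary, this cocycle can be made supported on $A^c$, and a cocycle on $G_V$ supported on $A^c$ is exactly a cycle on the dual edges $(A^c)^*$, that is, a cycle of $G_D[(A^c)^*]$.
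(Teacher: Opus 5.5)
Your proposal is correct. The length bound is the paper's argument: your kite $Q_e$ is the union of the paper's two triangles $\Delta_e$ of area $\tfrac14\ell(e)\ell(e^*)$, and the Cauchy--Schwarz step is the same.

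The topological part takes a genuinely different route. The paper proves a general statement for any compact subsurface $N\subset S$ with complement $M$. It builds an explicit symplectic basis of $H_1(S,\Z)$ from handle cycles in the components, the boundary curves $c_i$ of non-tree edges of the bipartite incidence graph $G_{NM}$, and transverse curves $d_i$. From this basis it reads off that both images are primitive with $\rank U_A+\rank W_A=2g$. Primitivity of $U_A+W_A$ then comes from applying the same statement to a regular neighbourhood of all of $\Gmin$.

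You instead get primitivity of $U_A+W_A$ (your $I_U+I_W$) directly from Mayer--Vietoris, since the cokernel embeds in the free group $\tilde H_0(U\cap W)$. You get the rank from the intersection form, proving $U_A^\perp\subseteq W_A$ by cellular Poincar\'e duality between $G_V$-cochains and $G_D$-chains. That duality step is sound and already integral: a $1$-cochain on the graph $G_V[A]$ vanishing on all its cycles is the coboundary of an integer vertex function. So your ``up to finite index'' hedge is unnecessary. Combined with $U_A\cdot W_A=0$ (which is what your sentence about pairing ``with themselves'' should say), it gives $W_A=U_A^\perp$ and the exact formula $\rank(U_A+W_A)=2g-\rank(U_A\cap W_A)\ge g$.

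What each route buys: yours avoids the Euler-characteristic count and the construction of the transverse curves $d_i$, and its rank part is purely combinatorial. The paper's route yields a reusable proposition, primitivity of the homology image of any embedded graph, which it needs again in the later core and length-bound lemmas. Mayer--Vietoris alone only controls the sum of the two complementary images.

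Two minor points. The spanning forest $F$ you introduce is never used. For $W\simeq G_D[(A^c)^*]$ and $H_1(\Gmin)=H_1(U)\oplus H_1(W)$ to hold, let $G_V[A]$ contain all Voronoi vertices; otherwise some faces of $G_D$ get filled in. This does not affect the images in $H_1(S,\Z)$.
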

Notably this result is independent of the length $\ell(\delta)$ of a shortest saddle connection $\delta$. The result on the rank of the first homology image is established by constructing an incident graph for the connected components of a regular neighborhoood of $G_{min}$ in $S$. We then establish the length estimates using another result from graph theory. The main point is to keep track of the number of curves that can be extracted. After passing to a minimal homology core of rank at least $g$, the Bollobas-Szemeredi-Thomason graph inequality can be applied iteratively. This gives the following explicit estimate:

\begin{theorem}[Half rank log-bound] Let $S$ be a translation surface of genus $g \geq 2$ and $\Area(S)=4\pi g$. Then there exist $g$ homologically independent loops $(\alpha_i)_{i=1,\ldots,g}$ that can be extended to a homology basis of $H_1(S,\Z)$, such that 
\[ 
 \ell(\alpha_i)
 \le 
 \frac{70 g}{g-i+1} \cdot  \log(g-i+2) \qquad \text{ \ for all \ } i \in \{1,2,\ldots,g \}.
\]
\label{thm:half_rank_intro}
\end{theorem}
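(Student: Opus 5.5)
We start from \Cref{lem:Gmin_intro}. First we bound the number $N$ of Voronoi edges. The Voronoi graph $G_V$ has the cone points as seeds, so its faces are the Voronoi cells and there are $n \le 2g-2$ of them. Every vertex of $G_V$ has degree at least $3$. Euler's formula $V-N+n=2-2g$ together with $2N\ge 3V$ then gives $N\le 3(n+2g-2)\le 12g$. With $\Area(S)=4\pi g$ this yields
\[
L(\Gmin)\le \sqrt{2\cdot 12g\cdot 4\pi g}=\sqrt{96\pi}\,g<18g .
\]
Next we reduce to a core graph. Remove from $\Gmin$ all edges whose homology classes (through cycles) lie in the span of the others; more precisely, pass to a subgraph $\Gamma\subseteq\Gmin$ in which the map $H_1(\Gamma,\Z)\to H_1(S,\Z)$ is injective with the same image. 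Delete degree-one vertices and suppress degree-two vertices, so that edges become paths and length is preserved. The result is a graph $\Gamma$ with minimal degree at least $3$, total length at most $18g$, and first Betti number $b=\rank\ge g$. Its image is primitive by \Cref{lem:Gmin_intro}, so any independent loops found inside it extend to a basis of $H_1(S,\Z)$. Here we will need care: an injective homology map must be arranged without destroying primitivity. One option is to take the minimal subgraph with the same image, so that its kernel is killed by removing edges of cycles that are null-homologous in $S$.

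Now we extract loops iteratively. We use the Bollobás–Szemerédi–Thomason type inequality: a metric graph with all degrees at least $3$, Betti number $b\ge 2$ and total length $L$ contains a cycle of length at most $c\,\frac{L}{b}\log b$, with an explicit constant $c$ (about $4$). Having found $\alpha_1,\dots,\alpha_{i-1}$, we delete one edge of each chosen cycle. This lowers the Betti number by one per step, so the remaining graph still has Betti number $b-i+1\ge g-i+1$. We then clean it up again, removing degree-one and degree-two vertices, which keeps the total length at most $18g$. The shortest cycle $\alpha_i$ in this graph then has
\[
\ell(\alpha_i)\le c\cdot\frac{18g}{g-i+1}\log(g-i+2),
\]
and we use $\log(g-i+2)$ to cover the last steps, where the Betti number is $1$ and the cycle is simply bounded by the total length $18g$. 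Checking that $18c\le 70$ gives the statement.

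Independence works as follows. Each $\alpha_i$ uses an edge that is absent from all later graphs, so the cycles $\alpha_1,\ldots,\alpha_g$ are independent in $H_1(\Gamma)$. Injectivity then makes them independent in $H_1(S,\Z)$. Moreover they span a primitive subgroup of $H_1(\Gamma)$, since they are triangular with respect to the deleted edges. This subgroup is therefore primitive in $H_1(S,\Z)$, so the loops extend to a homology basis.

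The loops need to be simple closed curves on $S$. Cycles in $\Gmin$ are embedded, except where Voronoi edges cross their dual edges, and a shortest cycle is embedded in the graph. We expect mixed-edge crossings to be resolvable by small perturbation, or to be excluded because only one edge of each dual pair is kept. The main obstacle we anticipate is the constant bookkeeping: the precise form of the graph inequality with a uniform constant for small Betti number, combined with the core reduction, must give the final $70$.
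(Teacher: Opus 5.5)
Your outline follows the paper's route: the edge count, the area-identity bound on $L(\Gmin)$, a core with injective homology map and primitive image, iterated Bollob\'as--Szemer\'edi--Thomason with one edge deleted per chosen cycle, and triangularity for primitivity. Two steps do not go through as written.

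\textbf{The constant.} You round $\sqrt{96\pi}\approx 17.37$ up to $18$ and then need $18c\le 70$. The paper uses the inequality in the form $\sys(G)\le 4\frac{L}{b}\log(b+1)$ for connected metric graphs with $b\ge 1$. There $c=4$, and $18\cdot 4=72>70$, so your final check fails. The paper keeps $L(\Gmin)\le\sqrt{8\pi gN}\le\sqrt{96\pi g(g-1)}$ and uses $4\sqrt{96\pi}\approx 69.5\le 70$.

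This form of the inequality has two further advantages. It needs no degree hypothesis, so your clean-up step is unnecessary; that step cannot reach minimal degree $3$ anyway once a component is a single cycle. It also covers Betti number $1$ directly, where your $\log b$ version gives $0$.

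Since $\Gmin$ is in general disconnected, apply the inequality to a component $C$ of the current graph $G_i$ with $L(C)/b_1(C)\le L(G_i)/b_1(G_i)$. Then use that $\log(x+1)/x$ is decreasing to pass from $b-i+1$ to $g-i+1$.

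\textbf{The core.} Deleting edges of cycles that are null-homologous in $S$ does not make $H_1(\Gamma,\Z)\to H_1(S,\Z)$ injective. Take two disjoint parallel copies $a,b$ of a nonseparating curve, joined by an arc. The kernel is generated by $[a]-[b]$, yet no cycle of this graph is null-homologous.

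Taking a minimal subgraph with the same image does work, but the reason is the missing idea: primitivity. Let $K\neq 0$ be the kernel and $c_e\colon H_1(\Gamma,\Z)\to\Z$ the coefficient of an edge $e$ with $c_e(K)\neq 0$. Deleting $e$ replaces the image $U$ by a subgroup of finite index $[\Z:c_e(K)]$. That subgroup is the image of an embedded graph, so it is primitive by \Cref{prop:subsurface}. Hence the index is $1$, the image is unchanged, and minimality is contradicted.

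The paper avoids this by taking the union of $r$ fundamental cycles, relative to a spanning forest, whose classes are independent in $H_1(S,\Z)$ (\Cref{lem:core}). Then $b_1=r$ and injectivity are immediate.

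On simplicity, your second alternative is the right one: $\Gmin$ is embedded, since a Voronoi edge crosses only its own dual, so graph cycles are simple closed curves. With these repairs your argument coincides with the paper's.
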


Expressing this result in terms of a portion of the homology, we have:

\begin{corollary} \label{cor:prop_intro}
Let $S$ be a translation surface of genus $g \geq 2$ and $\Area(S)=4\pi g$.  Then for any $\lambda \in (0,1)$ there are $\lfloor \lambda \cdot g \rfloor $ homologically independent loops $(\alpha_i)_{i=1,\ldots,\lfloor \lambda \cdot g \rfloor}$ that can be extended to a homology basis of $H_1(S,Z)$, such that
\[
 \ell(\alpha_i)
 \le
 \frac{70}{1-\lambda}\log(g+2)    \qquad \text{ \ for all \ } i \in \{ 1,2,\ldots,\lfloor \lambda \cdot g \rfloor \}. 
\]
\end{corollary}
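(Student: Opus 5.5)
The plan is to derive the corollary directly from Theorem~\ref{thm:half_rank_intro}. That theorem supplies $g$ homologically independent loops $\alpha_1,\dots,\alpha_g$ which extend to a basis of $H_1(S,\Z)$ and satisfy
\[
\ell(\alpha_i)\le \frac{70g}{g-i+1}\log(g-i+2).
\]
We keep only the first $m=\lfloor \lambda g\rfloor$ of these loops. A subset of a family that extends to a $\Z$-basis still extends to a $\Z$-basis: the remaining $\alpha_{m+1},\dots,\alpha_g$ together with the original completion provide the extension. So the primitivity and independence properties carry over, and only the length bound needs checking. If $m=0$ the statement is vacuous, so assume $m\ge 1$.

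Fix $i\le m\le \lambda g$. The bound splits into two factors, which we treat separately.

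\emph{The rational prefactor.} Since $g-i+1\ge g-\lambda g+1>(1-\lambda)g$, we get
\[
\frac{g}{g-i+1}<\frac{1}{1-\lambda}.
\]

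\emph{The logarithm.} Since $i\ge 1$, we have $g-i+2\le g+1<g+2$. Because $\log$ is increasing, $\log(g-i+2)\le\log(g+2)$.

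Multiplying the two estimates gives $\ell(\alpha_i)\le \frac{70}{1-\lambda}\log(g+2)$ for every $i\le\lfloor\lambda g\rfloor$, which is the claimed bound.

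There is no real obstacle here. The only step needing care is the elementary inequality $g-i+1>(1-\lambda)g$, which uses $i\le\lambda g$, itself immediate from $i\le\lfloor\lambda g\rfloor$. Everything substantive is contained in Theorem~\ref{thm:half_rank_intro}, which we take as given.
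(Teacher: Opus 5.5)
Your proposal is correct and follows the paper's route: the paper states this corollary as an immediate consequence of Theorem~\ref{thm:half_rank_intro}, keeping the first $\lfloor \lambda g\rfloor$ loops and using $g-i+1>(1-\lambda)g$ together with $\log(g-i+2)\le\log(g+2)$. Your points that a subfamily of a basis-extendable family is still basis-extendable and that $\lfloor\lambda g\rfloor<g$ keeps the indices in range supply exactly the routine details the paper leaves implicit.
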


This is the first main result of the paper. It is noteworthy that there is no dependence on the systole or homology systole of the surface, like in \cite{bps}.   
\\
A \textit{cut graph} $G_f$ of a closed surface $S$ is an embedded graph, such that $S \backslash G_f$ is a single topological disk. The second construction starts from the full Voronoi graph $G_V$ and produces a cut graph. A standard tree-cotree procedure deletes enough Voronoi edges to merge all complementary Voronoi cells into one disk. The remaining cut graph $G_f$ has first Betti number $2g$, and induces an isomorphism on the first homology. In this case we obtain: 

\begin{theorem}(Short integral homology basis) Let $S$ be a translation surface of genus $g \geq 2$ with $\Area(S)=4\pi g$. Let $\delta$ be a shortest saddle connection. Let $G_V$ be the Voronoi graph of $S$ with the cone points as seeds. Let $G_f$ be a cut graph of $G_V$ with first Betti number $2g$. Then for the total length of the edges of the graph we have:
\[
 L(G_f)\le L(G_V)
 \le \frac{2\Area(S)}{\ell(\delta)}.
\]
Furthermore there exist $2g$ loops $(\alpha_i)_{i=1,\ldots,2g}$ whose homology classes form a basis of $H_1(S,\Z)$, such that
\[
 \ell(\alpha_i)
 \le \frac{32 \pi g}{(2g -i +1) \cdot \ell(\delta)}\log(2g-i+2)  \qquad \text{ \ for all \ } i \in \{1,2,\ldots,2g \}.
\]
\label{thm:full_rank_intro}
\end{theorem}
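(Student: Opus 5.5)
We prove the two claims of the theorem separately. First the length bound $L(G_V)\le 2\Area(S)/\ell(\delta)$. Second, the existence of a short homology basis, obtained by applying the Bollobás–Szemerédi–Thomason type graph inequality iteratively to the cut graph $G_f$.

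\emph{Length of the Voronoi graph.} Let $e$ be an edge of $G_V$. It separates the Voronoi cells of two cone points $p,q$, which may coincide; if they coincide, $e$ lies between two different sheets of the same cell. Let $e^*$ be the dual edge: the geodesic from $p$ through the midpoint region of $e$ to $q$. This is a saddle connection, or a concatenation bounded below by $\ell(\delta)$. Every point of $e$ is equidistant from $p$ and $q$ at distance at least $\ell(\delta)/2$, because the dual segment has length at least $\ell(\delta)$.

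Now take the union of the two triangles spanned by $e$ with $p$ and with $q$. This is a quadrilateral $Q_e$ with diagonals $e$ and $e^*$, and these diagonals are orthogonal in the flat structure. Hence
\[
\Area(Q_e)=\tfrac12\,\ell(e)\,\ell(e^*)\ge \tfrac12\,\ell(e)\,\ell(\delta).
\]
If $e$ and $e^*$ do not cross, we instead use the two triangles with heights at least $\ell(\delta)/2$ each, which gives the same bound.

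The quadrilaterals $Q_e$ have disjoint interiors, since they decompose each Voronoi cell into the cone triangles over its boundary edges. Summing over all edges therefore gives $\tfrac12\,\ell(\delta)\,L(G_V)\le \Area(S)$, which is the claim. The inequality $L(G_f)\le L(G_V)$ is immediate because $G_f$ is a subgraph of $G_V$.

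\emph{Cut graph.} Take a spanning tree $T^*$ of the dual graph. Delete from $G_V$ the edges dual to $T^*$. Merging cells along the edges of a tree makes the complement a single disk, so the remaining graph $G_f$ is a cut graph. Because $S\setminus G_f$ is a disk, the inclusion $G_f\hookrightarrow S$ induces a surjection on $H_1$. Euler characteristic then gives $b_1(G_f)=2g$, so this surjection is an isomorphism $H_1(G_f,\Z)\cong H_1(S,\Z)$.

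\emph{Short basis.} Remove vertices of degree one repeatedly and suppress vertices of degree two. This yields a graph $G$ with minimum degree at least $3$, with $b_1(G)=2g$ and $L(G)\le L(G_f)$. A graph of minimum degree $3$ with first Betti number $b$ has at most $2(b-1)$ vertices and $3(b-1)$ edges.

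The iterative step is as follows. Suppose we have already chosen cycles $\alpha_1,\dots,\alpha_{i-1}$, and the current graph has Betti number $b=2g-i+1$. The Bollobás–Szemerédi–Thomason bound, applied to the graph with edge lengths, gives a cycle $\alpha_i$ of length at most $C\,\frac{L}{b}\log(b+1)$, where $L$ is the total length. A weighted girth bound of this kind, with constant $C=4$ or $8$, follows from the unweighted one by subdivision.

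We then delete an edge of $\alpha_i$ and reduce again. This keeps total length at most $L(G_V)$ and lowers the Betti number by one. Since each deleted edge lies on the chosen cycle, the classes $\alpha_1,\ldots,\alpha_{2g}$ form a triangular system with respect to a basis of $H_1(G_f)$. They are therefore a $\Z$-basis, and through the isomorphism above they form a $\Z$-basis of $H_1(S,\Z)$.

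Inserting $L\le 2\cdot 4\pi g/\ell(\delta)=8\pi g/\ell(\delta)$ gives
\[
\ell(\alpha_i)\le C\cdot\frac{8\pi g}{(2g-i+1)\,\ell(\delta)}\log(2g-i+2),
\]
which is the stated bound for $C=4$.

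\emph{Main obstacle.} The delicate step is the area estimate in the first part. Degenerate cases need care: an edge whose two sides belong to the same cone point, and the case where $e^*$ does not meet $e$. In these cases one must still secure disjoint regions of area at least $\tfrac12\ell(e)\ell(\delta)$. I would handle them using the triangle-height formulation, with height at least $\ell(\delta)/2$ on each side, rather than the diagonal formula. The second point to verify is that the constant in the weighted girth inequality is indeed at most $4$.
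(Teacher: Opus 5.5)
Your proposal is correct and follows the paper's proof essentially step by step. You decompose the Voronoi cells into cone triangles with heights $\ell(e^*)/2\ge \ell(\delta)/2$ (your $Q_e$ is just the union of the paper's two triangles $\Delta_e$), build a cotree cut graph with $H_1(G_f,\Z)\cong H_1(S,\Z)$, and run the iterative Bollob\'as--Szemer\'edi--Thomason argument with the triangular change of basis. The constant you hesitate about is exactly the metric-graph inequality with constant $4$ that the paper takes as a cited black box (Theorem~\ref{thm:bst}), so you can invoke it directly. The degree-one/degree-two reduction step is harmless but not needed.
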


Thus the mixed graph and the Voronoi subgraph give complementary information: the first is uniform with respect to degeneration of the shortest saddle connection, while the second extends to the full rank. 

The paper is organized as follows. In \Cref{sec:geometry} we gather some information about the Voronoi graph and its dual, establish the length estimates for the mixed graph and provide the tree-cotree cut graph construction for the Voronoi graph.  In \Cref{sec:duality} we prove that the graph $\Gmin$ contains at least $g$ linearly independent loops of $H_1(S,\Z)$ that can be extended to a basis. In \Cref{sec:graph} we formulate the metric-graph systolic argument for an arbitrary number of cycles and prove the length estimates from Theorems \ref{thm:half_rank_intro} and \ref{thm:full_rank_intro}.

\section{The Voronoi graph and its dual}\label{sec:geometry}

A metric graph $G$ is a finite graph whose edges have positive length. Its \textit{systole} $\sys(G)$ or \textit{girth} is the length of a shortest noncontractible simple cycle. We recall that the (total) \textit{length} $L(G)$ of a graph is the sum of the lengths of all edges of $G$. Let $G$ be a graph embedded in a surface $S$. The graph $G$ is \textit{cellular} if the faces of $G$ are topological disks. If $G$ is a cellular graph, then so is its dual graph $G^*$. We start by gathering facts about the Voronoi graph whose seeds are the cone points of a translation surface and its dual. Both graphs are cellular graphs.\\

Let $S$ be a closed translation surface of genus $g\ge2$ with $n$ cone points $(p_i)_{i=1,\ldots,n}$, where the cone angle at $p_i$ is $2\pi(k_i +1)$. Then $S$ is in the stratum $\mathcal{H}(k_1,k_2,\ldots,k_n)$ and it follows from the Euler characteristic of the surface that 
\begin{equation}
      \sum_{i=1}^n k_i = 2g-2.
\label{eq:Euler}
\end{equation}

The \textit{cut locus} $CL(X)$  of a subset $X \subset S$ is the set of points for which there are at least two shortest paths to $X$. More precisely, let $\gamma_{pq}$ be a shortest geodesic arc between two points $p$ and $q$ on $S$. Then
\begin{equation*}
   CL(X) := \{p \in S \mid \exists \gamma_{x,p},\gamma_{x',p}, \gamma_{x,p} \neq \gamma_{x',p}, \text{ with } x,x' \in X \text{ and } \dist(X,p)=\ell(\gamma_{x,p}) = \ell(\gamma_{x',p}) \}.
\label{eq:cut_locus}
\end{equation*}
The \textit{Voronoi graph} $G_V$ with seeds $(p_i)_{i=1,\ldots,n}$ is the cut locus of these points in $S$.
\[
              G_V = CL(\{ p_1,p_2,\ldots,p_n \})
\]
As the surface is Euclidean outside the cone points, the edges of the Voronoi graph are straight line segments. It follows from the definition that at least three edges meet at each vertex of $G_V$. Cutting the surface along $G_V$ we obtain the $n$ \textit{Voronoi regions} $(R_i)_{i=1,\ldots,n}$, where $p_i \in R_i$. The Voronoi regions are topological disks that tessellate the surface $S$. Let $G^*_V = G_D$ be the metric dual graph of $G_V$, such that the vertices of $G_D$ are the cone points and the edges are straight line segments that intersect the edges of $G_V$ perpendicularly. We note that in the case of a generic surface $S$ the graph $G_D$ is a Delaunay triangulation of the surface. For each edge $e$ of $G_V$, let $e^*$ be the dual edge in $G_D$ that intersects $e$. For example, Figure~\ref{fig:square_tiled_duality} depicts the Voronoi graph and its metric dual graph for an L-shaped square-tiled translation surface.
 Set
\[
 a_e=\ell(e),\qquad b_e=\ell(e^*).
\]
Denote by $G_{VD}$ the graph we obtain by combining the Voronoi graph and its dual graph. Let $E: = E(G_V) $ and $E(G_D)$ be the set of edges of $G_V$ and $G_D$ respectively. Then
\begin{equation}
 N= |E| = |E(G_V)|=|E(G_D)|
\label{eq:number_edges_N}
\end{equation}
is also the number of dual edge pairs.

\begin{figure}[htbp]
    \centering
    \begin{tikzpicture}[scale=2.5]
        
        \coordinate (V00) at (0, 0);
        \coordinate (V10) at (1, 0);
        \coordinate (V20) at (2, 0);
        \coordinate (V21) at (2, 1);
        \coordinate (V11) at (1, 1);
        \coordinate (V12) at (1, 2);
        \coordinate (V02) at (0, 2);
        \coordinate (V01) at (0, 1);
        
        \coordinate (C1)  at (0.5, 0.5); 
        \coordinate (C2)  at (1.5, 0.5); 
        \coordinate (C3)  at (0.5, 1.5); 

        \draw[thick, blue!70!black] (C1) -- (C2) node[pos=0.25, below] {\tiny$e_1$};
        \draw[thick, blue!70!black] (C1) -- (C3) node[pos=0.25, left] {\tiny$e_2$};
        
        \draw[thick, blue!70!black, dashed] (C1) -- (0, 0.5) node[midway, below] {\tiny$e_3$};
        \draw[thick, blue!70!black, dashed] (C2) -- (2, 0.5) node[midway, below] {\tiny$e_3$};
        
        \draw[thick, blue!70!black, dashed] (C1) -- (0.5, 0) node[midway, right] {\tiny$e_4$};
        \draw[thick, blue!70!black, dashed] (C3) -- (0.5, 2) node[midway, left] {\tiny$e_4$};

        \draw[thick, blue!70!black, dashed] (C2) -- (1.5, 0) node[midway, right] {\tiny$e_5$};
        \draw[thick, blue!70!black, dashed] (C2) -- (1.5, 1) node[midway, right] {\tiny$e_5$};

        \draw[thick, blue!70!black, dashed] (C3) -- (0, 1.5) node[midway, above] {\tiny$e_6$};
        \draw[thick, blue!70!black, dashed] (C3) -- (1, 1.5) node[midway, above] {\tiny$e_6$};

        \draw[thick, black] (V10) -- (V11) node[pos=0.75, right] {\tiny$e_1^*$};
        \draw[thick, black] (V01) -- (V11) node[pos=0.75, above] {\tiny$e_2^*$};

        \draw[thick, black] (V00) -- (V10) node[midway, below] {\tiny$e_4^*$};
        \draw[thick, black] (V10) -- (V20) node[midway, below] {\tiny$e_5^*$};
        \draw[thick, black] (V20) -- (V21) node[midway, right] {\tiny$e_3^*$};
        \draw[thick, black] (V21) -- (V11) node[midway, above] {\tiny$e_5^*$};
        \draw[thick, black] (V11) -- (V12) node[midway, right] {\tiny$e_6^*$};
        \draw[thick, black] (V12) -- (V02) node[midway, above] {\tiny$e_4^*$};
        \draw[thick, black] (V02) -- (V01) node[midway, left]  {\tiny$e_6^*$};
        \draw[thick, black] (V01) -- (V00) node[midway, left]  {\tiny$e_3^*$};

        \foreach \point in {V00, V10, V20, V21, V11, V12, V02, V01}
            \fill[red!80!black] (\point) circle (0.8pt);

        \foreach \point in {C1, C2, C3}
            \fill[blue!70!black] (\point) circle (0.8pt);

        \node[black, above right, xshift=0.5pt, yshift=0.5pt] at (C1) {\tiny$v_1$};
        \node[black, below left, xshift=-0.5pt, yshift=0.5pt] at (C2) {\tiny$v_2$};
        \node[black, below left, xshift=0.5pt, yshift=0.5pt] at (C3) {\tiny$v_3$};

        \node[red!80!black, below left] at (V00) {\tiny$p$};
        \node[red!80!black, below left] at (V10) {\tiny$p$};
        \node[red!80!black, below left] at (V01) {\tiny$p$};
        \node[red!80!black, below left] at (V11) {\tiny$p$};
        \node[red!80!black, above right] at (V12) {\tiny$p$};
        \node[red!80!black, below right] at (V20) {\tiny$p$};
        \node[red!80!black, above right] at (V21) {\tiny$p$};
        \node[red!80!black, above left] at (V02) {\tiny$p$};

    \end{tikzpicture}
    \caption{The blue network represents the Voronoi graph $G_V = CL(\{p\})$ with vertices $V(G_V) = \{v_1, v_2, v_3\}$ and edges $E(G_V) = \{e_1, \dots, e_6\}$. The solid black lines represent the metric dual graph $G_D$ with the single vertex  $ p$ and edges $E(G_D) = \{e_1^*, \dots, e_6^*\}$. Each edge $e_i$ perpendicularly bisects its corresponding dual edge $e_i^*$.}
    \label{fig:square_tiled_duality}
\end{figure}

\begin{definition}[Mixed shorter-edge graph]
For each dual pair $(e,e^*)$, choose $e$ if $a_e\le b_e$ and choose $e^*$ if $b_e<a_e$. Let $\Gmin$ be the subgraph of $G_{VD}$ formed by the union of the selected edges and the attached vertices. We call $\Gmin$ the mixed shorter-edge graph.
\end{definition}

The graph $\Gmin$ need not be connected. Indeed, a chosen edge $e$ of $\Gmin$ that is an edge of the Voronoi graph never crosses an edge $d$ of $\Gmin$ that is in the dual graph $G_D$, because each edge $e$ of $G_V$ crosses only its dual edge. So the edges of $G_V$ in $\Gmin$ are disconnected from the edges of $G_D$ in $\Gmin$. The appropriate topological invariant is therefore the rank of the image
\[
 H_1(\Gmin,\Z)\longrightarrow H_1(S,\Z).
\]

\begin{proposition}[Area identity]\label{prop:area}
With the notation above,
\[
 \Area(S)=\frac{1}{2} \sum_{e\in E}a_e b_e.
\]
\end{proposition}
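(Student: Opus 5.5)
The plan is to decompose $S$ into the \emph{Voronoi--Delaunay kites} (quadrilaterals) associated with dual edge pairs, and then sum their areas. For each edge $e$ of $G_V$ with endpoints $u,v$ (Voronoi vertices), the dual edge $e^*$ joins the two cone points $p_i,p_j$ whose Voronoi regions $R_i,R_j$ are adjacent along $e$. Let $Q_e$ be the region swept by the straight segments from $p_i$ to the points of $e$ together with those from $p_j$ to the points of $e$. Equivalently, $Q_e$ is the union of two triangles $T_e^i=\mathrm{conv}(p_i,u,v)$ and $T_e^j=\mathrm{conv}(p_j,u,v)$, developed into the plane. The claim is that $Q_e$ is a flat quadrilateral with diagonals $e$ and $e^*$, that these diagonals are perpendicular, and that therefore $\Area(Q_e)=\tfrac12 a_e b_e$.

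The key steps are as follows.

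\textbf{Step 1 (star-shapedness).} Each Voronoi region $R_i$ is star-shaped with respect to $p_i$ along shortest geodesics. If $x\in R_i$, then the unique shortest path from $x$ to the cone point set ends at $p_i$ and contains no cone point in its interior. Every point $y$ on that path has $\dist(y,p_i)=\dist(y,\{p_k\})$ by the triangle inequality, so $y\in R_i$, and $y$ lies in the interior of $R_i$ unless $y=x$.

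\textbf{Step 2 (decomposition of each region).} Consequently $R_i$, cut open along the geodesic rays from $p_i$ to the vertices of $G_V$ on $\partial R_i$, decomposes into flat triangles $T_e^i$, one for each boundary edge $e$ of $R_i$ (counted with multiplicity, since an edge may appear twice on $\partial R_i$). The flatness of each triangle comes from its interior containing no cone point; its apex angle at $p_i$ contributes to the cone angle $2\pi(k_i+1)$. Each edge $e$ then appears exactly twice overall, once from each side. Summing over all regions gives
\[
\Area(S)=\sum_i\Area(R_i)=\sum_{e\in E}\bigl(\Area(T_e^i)+\Area(T_e^j)\bigr).
\]

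\textbf{Step 3 (geometry of a kite).} Develop $T_e^i\cup T_e^j$ across $e$ into $\R^2$. Every point of $e$ is equidistant from $p_i$ and $p_j$ via the straight segments realizing these distances, so $e$ lies on the perpendicular bisector of the developed segment $p_ip_j$. This developed segment is exactly the straight dual edge $e^*$ crossing $e$ perpendicularly, by the definition of $G_D$. Hence $\Area(T_e^i)=\tfrac12 a_e h_i$ and $\Area(T_e^j)=\tfrac12 a_e h_j$, where $h_i,h_j$ are the distances from $p_i,p_j$ to the line containing $e$. Since both distances are measured perpendicular to $e$, along the line of $e^*$, we get $h_i+h_j=b_e$. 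This holds even if $e^*$ meets the line of $e$ outside the segment $e$ itself, because the signed heights still add. Summing over $e$ gives $\Area(S)=\tfrac12\sum_e a_e b_e$.

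\textbf{Main obstacle.} The main obstacle is Step 3 in degenerate configurations. These include an edge $e$ with $p_i=p_j$, such as the single cone point example in Figure~\ref{fig:square_tiled_duality}, or an edge that appears twice on the boundary of the same region. A further degenerate case arises when the foot of $e^*$ lies outside $e$, so that the kite is non-convex or the triangles $T_e^i$, $T_e^j$ are obtuse. I would handle all of these uniformly by working with the two sides of $e$ separately, as the two sector-triangles of the cut-open regions in Step 2. This way each side is a genuine flat triangle with base $e$. The identity $h_i+h_j=b_e$ then follows from the fact that the developed $e^*$ is the straight segment realizing both distances, and it is orthogonal to $e$.
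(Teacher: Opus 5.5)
Your proof is correct and follows essentially the same route as the paper: both decompose each Voronoi region $R_i$ into triangles with apex $p_i$ over its boundary edges, and both sum using that each edge of $G_V$ is counted once from each side. The differences are minor. The paper uses directly that $e$ lies on the perpendicular bisector of $e^*$, so each triangle has height $b_e/2$ and area $\tfrac14 a_e b_e$; you instead pair the two triangles and use $h_i+h_j=b_e$, although in fact $h_i=h_j=b_e/2$. You also supply the star-shapedness justification and the case where the foot of the perpendicular falls outside $e$, both of which the paper leaves implicit.
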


\begin{proof}Let $S$ be a translation surface with $n$ cone points. Let $R_i$ be a Voronoi region around the cone point $p_i$. The vertices of the region $R_i$ are the vertices of $G_V$ around the point $p_i$. Connecting the vertices with the cone point we obtain a triangle $\Delta_e$ for each edge $e \subset \partial R_i$ on the boundary of $R_i$ (see Figure~\ref{fig:Voronoi region}). As $e^*$ intersects $e$ at an angle of 90 degrees and as the height of $\Delta_e$ is $\frac{\ell(e^*)}{2}$, the area of the triangle is 
\[
      \Area(\Delta_e) = \frac{1}{4} \ell(e)\cdot \ell(e^*)
\] 

\begin{figure}[htbp]
    \centering
    \begin{tikzpicture}[scale=0.6]
        \coordinate (V1) at (3, -1);      
        \coordinate (V2) at (1, -3);      
        \coordinate (V3) at (-2.5, -1.5); 
        \coordinate (V4) at (-3.5, 1.5);  
        \coordinate (V5) at (-1, 3);      
        \coordinate (V6) at (2, 2.5);     

        \coordinate (Pi) at (0, 0);

        \draw[thick] (V3) -- (V2);
        \draw[thick] (V2) -- (V1); 
        \node at (1.56,-2.2) {$e$};
        \draw[thick] (V1) -- (V6);
        \draw[thick] (V6) -- (V5);
        \draw[thick] (V5) -- (V4);

        \draw[thick, dashed] (V4) -- (V3);

        \draw[thick, red!80!black] (Pi) -- (V1);
        \draw[thick, red!80!black] (Pi) -- (V2);

        \coordinate (EstarEnd) at (4, -4); 
        
        \coordinate (BlueEnd) at (5.019, 1.434); 
        
        \draw[thick, blue!70!black] (Pi) -- (EstarEnd);
        \draw[thick, blue!70!black] (Pi) -- (BlueEnd);

        \fill[red!80!black] (Pi) circle (1.5pt);
        \fill[red!80!black] (EstarEnd) circle (1.5pt);
        \fill[red!80!black] (BlueEnd) circle (1.5pt);

        \node[black, above left, yshift=2pt] at (Pi) {$p_i$};
        \node[black, scale=1.2] at (-1.5, 1.5) {$R_i$};
        \node[black] at (1.4, -0.8) {$\Delta_e$};
        
        \node[black, left] at (3.8, -3.9) {$e^*$};

    \end{tikzpicture}
    \caption{Voronoi region $R_i$ around the cone point $p_i$ and triangle $\Delta_e$ for the edge $e \subset \partial R_i$.}
    \label{fig:Voronoi region}
\end{figure}

As $S$ is tessellated by the Voronoi regions we have
\[
    \Area(S) = \sum_{i=1}^n \Area(R_i) = \sum_{i=1}^n \sum_{e \subset \partial R_i}   \Area(\Delta_e) = \frac{1}{2} \sum_{e \in E}  \ell(e)\cdot \ell(e^*).
\]
Here the last equation is true as each edge $e$ is used twice in the decomposition.
\end{proof}

The ideas for this proof can also be found in \cite{bg}. Since exactly one edge is selected from each pair,
\[
 L(\Gmin)=\sum_{e\in E(G_V)}\min(a_e,b_e).
\]
Using standard inequalities we obtain:

\begin{proposition}[Total length of graphs]\label{prop:mixedlength}
Let $G_V$ be the Voronoi graph of $S$ and $\Gmin$ be the mixed shorter edge graph. Let $\ell(\delta)$ be the length of a shortest saddle connection $\delta$. Then
\begin{itemize}
\item[1.)] $\displaystyle L(G_V) \leq \frac{2 \Area(S)}{\ell(\delta)}$.
\item[2.)]  $L(\Gmin)\le \sqrt{2N\,\Area(S)}$.
\end{itemize}
\end{proposition}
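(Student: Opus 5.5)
Both parts follow from the area identity of Proposition~\ref{prop:area}, $\Area(S)=\frac12\sum_{e\in E}a_eb_e$, combined with one elementary inequality each. The plan is to bound the dual edge lengths $b_e$ from below by $\ell(\delta)$ for part 1.), and to use Cauchy--Schwarz on $\min(a_e,b_e)\le\sqrt{a_eb_e}$ for part 2.).

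For 1.), the key observation is that every dual edge $e^*$ is a saddle connection, or at least a geodesic segment joining two cone points $p_i,p_j$ (possibly $i=j$). Indeed, $e^*$ runs from the seed $p_i$ of one Voronoi region $R_i$ to the seed $p_j$ of the adjacent region $R_j$, crossing $e$ perpendicularly. The segment is the union of the two heights of the triangles $\Delta_e\subset R_i$ and $\Delta_e'\subset R_j$, each of length $\frac12\ell(e^*)$. Since the interior of a Voronoi region contains no cone point other than its seed, $e^*$ has no cone point in its interior, so it is a saddle connection. Hence $b_e=\ell(e^*)\ge\ell(\delta)$ for every $e\in E$, and therefore
\[
\Area(S)=\tfrac12\sum_{e}a_eb_e\ \ge\ \tfrac12\,\ell(\delta)\sum_e a_e=\tfrac12\,\ell(\delta)\,L(G_V),
\]
which is 1.). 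The one point needing care is that $e^*$ genuinely is a geodesic between cone points with no interior singularity, rather than a broken path. This follows from the perpendicular bisector structure used in the proof of Proposition~\ref{prop:area}: the foot of the height from $p_i$ onto $e$ coincides with the foot from $p_j$, because a point of $e$ is equidistant from $p_i$ and $p_j$ and $e$ is perpendicular to $e^*$.

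For 2.), we have $L(\Gmin)=\sum_e\min(a_e,b_e)\le\sum_e\sqrt{a_eb_e}$. Applying Cauchy--Schwarz to the $N$ terms gives
\[
\sum_e\sqrt{a_eb_e}\le\sqrt{N}\Big(\sum_e a_eb_e\Big)^{1/2}=\sqrt{N}\,\sqrt{2\Area(S)},
\]
again by Proposition~\ref{prop:area}, which is the claimed bound $\sqrt{2N\Area(S)}$. Part 2.) is routine; the only real obstacle in the proposition is the geometric justification in part 1.) that dual edges are saddle connections.
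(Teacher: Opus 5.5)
Your main line is the paper's own proof. For 1.) you combine the area identity of Proposition~\ref{prop:area} with $b_e\ge\ell(\delta)$, and for 2.) you use $\min(a_e,b_e)\le\sqrt{a_eb_e}$ followed by Cauchy--Schwarz; part 2.) is complete as written. The paper simply asserts that every dual edge is a saddle connection. The extra justification you give for this has a gap.

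You realize $e^*$ as the union of the heights of $\Delta_e$ and $\Delta_e'$, so that it lies in $R_i\cup R_j$. This needs the common foot $M$ of the two heights, i.e.\ the midpoint of the developed segment $p_ip_j$, to lie on the segment $e$. The perpendicular crossing you invoke to get this is exactly that assumption, so the argument is circular at this point. The assumption also fails in general. Suppose the Delaunay triangle $p_ip_jp_k$ adjacent to $p_ip_j$ is obtuse at $p_k$. Then $p_k$ lies inside the circle with diameter $p_ip_j$, so $\dist(M,p_k)\le|Mp_k|<|Mp_i|$. Every point $y$ of $e$ satisfies $\dist(y,p_k)\ge\dist(y,p_i)$, so $M\notin e$. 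Hence the straight segment from $p_i$ to $p_j$ does not cross $e$, and the containment in $R_i\cup R_j$ (on which your ``no interior cone point'' step rests) breaks down.

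The fact you need is still true, by an empty-disk argument. Take $x$ in the interior of $e$ and let $r=\dist(x,p_i)=\dist(x,p_j)$, which is the distance from $x$ to the whole cone set. Straight rays from $x$ then map the open Euclidean disk of radius $r$ isometrically (as an immersion) into $S$, and its image contains no cone point. The two shortest segments from $x$ to $p_i$ and $p_j$ are radii ending at two distinct boundary points $u,v$. The interior of the chord $uv$ lies in the open disk, so the chord maps to a saddle connection of length $|uv|$. Now develop $\Delta_e\cup\Delta_e'$ along $e$. The apexes sit at $u$ and $v$, and the line through $e$ is the perpendicular bisector of $uv$. So both triangles have height $|uv|/2$ over that line, which is all the formula $\Area(\Delta_e)=\frac14\ell(e)\ell(e^*)$ uses. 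Setting $b_e:=|uv|$, the area identity holds and $b_e\ge\ell(\delta)$. Your computation for 1.) therefore goes through without assuming that $e^*$ crosses $e$.
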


\begin{proof} 1.) As all edges $e^*$ of $G_D$ are saddle connections we have that $\ell(e^*) \geq \ell(\delta)$, for a shortest saddle connection $\delta$. So 
\[
   2 \Area(S) = \sum_{e \in E}  \ell(e)\cdot \ell(e^*) \geq   \sum_{e \in E}  \ell(e)\cdot \ell(\delta) \geq L(G_V)\cdot \ell(\delta).
\] 
Therefore we obtain 1.) by dividing both sides by $\ell(\delta)$.\\

2.) We have that $\min(a_e,b_e) \leq \sqrt{a_e \cdot b_e}$. So by \Cref{prop:area} and using the Cauchy-Schwarz inequality we get:
\[
 L(\Gmin) = \sum_{e \in E} \min(a_e,b_e)
 \le\sum_{e \in E}\sqrt{a_eb_e}
 \le\sqrt{N\sum_{e \in E} a_e b_e}
 =\sqrt{2N\,\Area(S)}.
\]
\end{proof}

\begin{proposition}[Edge count]\label{prop:edgecount}
Let $G_V$ be the Voronoi graph of translation surface $S$ whose seeds are the $n$ cone points of $S$. Then
\[
 N \le 3(2g-2+n) \leq 12(g-1).
\]
\end{proposition}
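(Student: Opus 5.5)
We prove the bound by an Euler characteristic count for the cellular graph $G_V$, combined with the fact that every vertex of $G_V$ has degree at least three. Let $V$, $N$, $F$ denote the numbers of vertices, edges and faces of $G_V$. The faces are the Voronoi regions $R_1,\ldots,R_n$, one for each cone point, so $F=n$. Since $G_V$ is cellular on the closed surface $S$ of genus $g$, Euler's formula gives
\[
 V - N + n = 2-2g .
\]

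Next we use the degree condition. As noted after the definition of $G_V$, at least three edges meet at each vertex. Counting edge endpoints (a loop edge contributes two to the degree of its vertex) gives $2N=\sum_v \deg v \ge 3V$, so $V\le \tfrac{2}{3}N$. Substituting $V = N - n + 2 - 2g$ yields
\[
 N - n + 2-2g \le \tfrac23 N, \qquad\text{hence}\qquad N \le 3(2g-2+n).
\]
This is the first inequality.

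For the second inequality we bound $n$ using \eqref{eq:Euler}. Each cone point has $k_i\ge 1$, because points of cone angle $2\pi$ are regular and not counted as cone points. Therefore $n\le \sum_i k_i = 2g-2$, and so
\[
 N\le 3(2g-2+2g-2)=12(g-1).
\]

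The only step needing care is justifying the two facts used: that $G_V$ is genuinely cellular with exactly $n$ faces, and that every vertex has degree at least three. The paper stated both earlier and we take them as given. Degenerate configurations, such as loop edges or multiple edges between the same pair of vertices, do not affect the degree-sum count.
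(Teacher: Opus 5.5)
Your proof is correct and follows the paper's own argument. It uses Euler's formula $V-N+n=2-2g$ for the cellular graph $G_V$ together with $3V\le 2N$ to get $N\le 3(2g-2+n)$, and then $k_i\ge 1$ with $\sum_i k_i=2g-2$ to get $n\le 2g-2$, which gives $N\le 12(g-1)$.
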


\begin{proof}
Let $v$ be the number of vertices of $G_V$. As $n$ is the number of faces of $G_V$, Euler's formula gives
\[
 v-N+n=2-2g .  
\]
Since every vertex has degree at least three and each edge belongs to two vertices, $3v\le 2N$.  Substituting $v=2-2g-n+N$ yields the first inequality.

As the vertices of the dual graph $G_D$ are the cone points $(p_i)_i$, we have that $k_i\ge 1$ and by \eqref{eq:Euler} 
\[
\sum_{i=1}^n k_i=2g-2,
\]
so $n\le2g-2$. This yields the second inequality.  
\end{proof}
Hence $N=O(g)$ and, under the normalization $\Area(S)=4\pi g$, the mixed graph has total length $O(g)$. Equality in $N \le 3(2g-2+n)$ holds iff every face of $G_D$ is a triangle. So we have equality in the generic case of a triangulation.\\

We now describe a graph construction that will also clarify the topology behind the later homology arguments. Let $S$ be a translation surface with $n$ cone points. When $n>1$, one can delete a controlled family of edges to merge the faces of the Voronoi graph to obtain a subgraph that generates the fundamental group $\pi_1(S)$. This tree-cotree decomposition can be found in \cite{ep}. We sketch the decomposition here for completeness. 

\begin{lemma}[Tree-cotree lemma]\label{lem:cut graph}
Let $G$ be a cellular graph on a closed orientable surface $S$ of genus $g \geq 1$. Then there exists a connected subgraph $G_f \subseteq G$ such that $S\setminus G_f$ is an open disk. Moreover,
\[
 b_1(G_f)=2g,
 \qquad
 L(G_f)\le L(G) \text{ \ and \ } 
\pi_1(G_f)\twoheadrightarrow  \pi_1(S).
\]
\end{lemma}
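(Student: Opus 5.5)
We follow the standard tree-cotree construction of Eppstein. Let $V,E,F$ denote the vertex, edge and face sets of the cellular graph $G$, and let $G^*$ be its dual graph on $S$, whose vertices are the faces of $G$ and whose edges $e^*$ correspond bijectively to the edges $e$ of $G$. Since $G$ is cellular and $S$ is connected, $G$ is connected, and so is $G^*$. Euler's formula gives $|V|-|E|+|F|=2-2g$.

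First choose a spanning tree $T\subseteq G$, which has $|V|-1$ edges. Next consider the edges $e^*$ of $G^*$ whose primal edges $e$ are not in $T$. We claim these dual edges form a connected subgraph of $G^*$ spanning all of $F$. To see this, note that $T$ is a tree, so a regular neighbourhood of $T$ is a closed disk. Its complement $S\setminus N(T)$ is therefore connected, since removing a disk from a connected surface leaves it connected. Two faces of $G$ can be joined in $S\setminus N(T)$ only by crossing edges not in $T$, so the dual edges of $E\setminus T$ form a connected subgraph of $G^*$. We may then choose a spanning tree $C^*$ of $F$ inside it, with $|F|-1$ edges.

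Now let $X=E\setminus(T\cup C)$, where $C$ denotes the set of primal edges whose duals lie in $C^*$. Then
\[
|X|=|E|-(|V|-1)-(|F|-1)=2g .
\]
Define $G_f=T\cup X$, with all vertices of $G$. This graph is connected because it contains $T$. It has $|V|$ vertices and $|V|-1+2g$ edges, so $b_1(G_f)=2g$. Since it is a subgraph of $G$ and all edge lengths are positive, $L(G_f)\le L(G)$.

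The complement $S\setminus G_f$ is obtained from the disjoint union of the open face disks by gluing them across the edges of $C$, i.e. across the dual tree $C^*$. Gluing disks along arcs following a tree pattern yields a single disk. One checks that each edge of $C$ is an open arc with both sides on distinct faces: distinctness holds because $C^*$ is a tree, so it has no loops. Inductively adjoining one face along one arc preserves being an open disk. Hence $S\setminus G_f$ is an open disk.

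Finally, to get $\pi_1(G_f)\twoheadrightarrow\pi_1(S)$, observe that $S$ is obtained from $G_f$ by attaching a single 2-cell, namely the closure of the disk $S\setminus G_f$, along its boundary word. By van Kampen, attaching 2-cells only adds relations, so the inclusion-induced map on $\pi_1$ is surjective. This also gives surjectivity on $H_1$, and since both have rank $2g$, an isomorphism of $H_1$.

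The main obstacle is making the claim that $S\setminus G_f$ is a disk rigorous when faces are not embedded disks in their closures: a face may meet itself along an edge or at vertices, as in the one-vertex Delaunay example. The gluing argument must therefore be done on open faces and open edges, working in the cell-complex structure rather than with closed polygons. Similarly, the connectivity of the dual complement requires care with $G^*$ possibly having loops and multiple edges. We would handle both points by working with the CW structure of $S$ given by $G$ and its dual CW structure.
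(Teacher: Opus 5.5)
Your proposal is correct and follows the same tree--cotree construction as the paper. You take a spanning tree $T$, choose a dual spanning tree on $F-1$ edges that crosses only non-tree edges, delete the crossed edges, and count $E-(V-1)-(F-1)=2g$ via Euler's formula. It is in fact more complete than the paper's sketch: you justify the existence of a cotree disjoint from $T$ through the connectedness of $S\setminus N(T)$, you make the face-merging step an honest induction using that $C^*$ has no loops, and you obtain $\pi_1(G_f)\twoheadrightarrow\pi_1(S)$ from attaching a single 2-cell, where the paper only asserts that the root-based loops generate $\pi_1(S)$.
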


\begin{proof}
Let $V,E,F$ denote the numbers of vertices, edges, and faces of the primal graph $G$ in $S$.  Choose a spanning tree $T\subset G$.  It has $V-1$ edges. Consider a dual metric graph $G^*$ in $S$.
Choose a spanning tree $T^*$ of $G^*$ that does not intersect $T$. It has $F-1$ edges.  Every edge of $T^*$ crosses a unique primal edge not belonging to $T$. Delete from $G$ precisely those $F-1$ crossed edges. Let $G_f$ denote the resulting subgraph.

The graph $G_f$ remains connected because it contains $T$.  Each deleted edge of $G$ is dual to an edge of $T^*$ and merges the two adjacent complementary faces. Since $T^*$ connects all $F$ faces without cycles, after all $F-1$ deletions the complement has exactly one face. Thus $S\setminus G_f$ is an open disk.

The number of non-tree edges left in $G_f$ is
\[
 E-(V-1)-(F-1)=E-V-F+2.
\]
Euler's formula $V-E+F=2-2g$ gives
\[
 E-V-F+2=2g.
\]
Hence 
\begin{equation}
G_f = T \cup \{e_1,e_2,\ldots,e_{2g} \} 
\label{eq:spanning_tree}
\end{equation}
is a spanning tree together with $2g$ additional edges $(e_i)_i$, so $b_1(G_f)=2g$.  Since edges were only deleted, $L(G_f)\le L(G)$. Choose a root of $T$ and form a loop $a_i$ by connecting each edge $e_i$ with the root of $T$. The loops $(a_i)_{i=1,\ldots,2g}$ then generate the fundamental group $\pi_1(S)$. 
\end{proof}

\begin{remark} The primal spanning tree $T$ guarantees connectedness. The dual spanning tree $T^*$ tells us which primal edges to delete in order to collapse the collection of complementary faces to one face.  What remains outside the primal tree consists of exactly $2g$ edges, one for each generator of the fundamental group of the surface.
\end{remark}

\section{Half rank theorem for the mixed short edge graph}\label{sec:duality}

Our goal in this section is to prove that the mixed shorter-edge graph $\Gmin$ of the Voronoi graph and its dual contains at least half of the homology of the surface. The central topological statement is independent of the flat metric. Let $G$ and $G^*$ be dual cellular graphs on a closed oriented surface $S$ of genus $g\geq 1$. Let $A\subset E(G)$ be a set of edges and let $G_A$ be the subgraph of $G$ constructed with these edges. Let furthermore $G^*_{\overline A}$ be the subgraph of $G^*$ containing the dual edge $e^*$ of the edge pair $(e,e^*)$ precisely when $e \notin A$. To illustrate this construction, Figure~\ref{fig:G_A} displays a cellular graph $G$ on a translation surface of genus $2$, highlighting both the subgraph $G_A$ and its corresponding dual structure $G^*_{\overline{A}}$ for a specified subset $A \subset E(G)$. Set
\begin{equation}
 U_A=\im\bigl(H_1(G_A,\Z)\to H_1(S,\Z)\bigr),
 \qquad
 W_A=\im\bigl(H_1(G^*_{\overline A},\Z)\to H_1(S,\Z)\bigr).
\label{eq:UW}
\end{equation}

\begin{figure}[htbp]
\centering
\begin{tikzpicture}[scale=1]
    
    \def\R{2.706}       
    \def\r{2.5}         
    
    \foreach \i in {0,1,...,7} {
        \coordinate (P\i) at (22.5 + \i*45:\R);
        \coordinate (M\i) at (45 + \i*45:\r); 
    }
    
    \coordinate (Center) at (0,0);
    
    \coordinate (Ftop) at (0, 1.44);
    \coordinate (Fleft) at (-1.44, 0);
    \coordinate (Fbottom) at (0, -1.44);
    \coordinate (Fright) at (1.44, 0);

    \begin{scope}[xshift=0cm]
        \fill[gray!5] (Center) -- (P1) -- (P2) -- (P3) -- cycle;
        \fill[gray!10] (Center) -- (P3) -- (P4) -- (P5) -- cycle;
        \fill[gray!5] (Center) -- (P5) -- (P6) -- (P7) -- cycle;
        \fill[gray!10] (Center) -- (P7) -- (P0) -- (P1) -- cycle;

        \draw[very thick, red] (P2) -- (P3) node[midway, above left] {\tiny$a$};
        \draw[very thick, red] (P6) -- (P7) node[midway, below right] {\tiny$a$};
        \draw[very thick, red] (P7) -- (P0) node[midway, right] {\tiny$d$};
        \draw[very thick, red] (P3) -- (P4) node[midway, left] {\tiny$d$};
        \draw[very thick, red] (Center) -- (P1) node[midway, below right] {\tiny$u_1$};
        \draw[very thick, red] (Center) -- (P5) node[midway, above right] {\tiny$u_3$};

        \draw[thick, dashed, gray] (P1) -- (P2) node[midway, above, text=black] {\tiny$b$};
        \draw[thick, dashed, gray] (P5) -- (P6) node[midway, below, text=black] {\tiny$b$};
        \draw[thick, dashed, gray] (P0) -- (P1) node[midway, above right, text=black] {\tiny$c$};
        \draw[thick, dashed, gray] (P4) -- (P5) node[midway, below left, text=black] {\tiny$c$};
        \draw[thick, dashed, gray] (Center) -- (P3) node[midway, above left, text=black] {\tiny$u_2$};
        \draw[thick, dashed, gray] (Center) -- (P7) node[midway, below, text=black] {\tiny$u_4$};
        
        \foreach \i in {0,1,...,7} { \filldraw[black] (P\i) circle (1.5pt); }
        \filldraw[black] (Center) circle (1.5pt) node[below left=2pt] {\tiny$v_2$};

        \foreach \i in {0,1,...,7} { 
            \node at (22.5 + \i*45:\R + 0.35) [font=\small, black] {\tiny$v_1$};
        }
   
    \end{scope}
    
    \begin{scope}[xshift=6.0cm]
        
        \draw[thin, lightgray] (P0)--(P1)--(P2)--(P3)--(P4)--(P5)--(P6)--(P7)--cycle;
        \draw[thin, lightgray] (Center)--(P1); \draw[thin, lightgray] (Center)--(P3);
        \draw[thin, lightgray] (Center)--(P5); \draw[thin, lightgray] (Center)--(P7);

        \draw[very thick, blue] (Ftop) -- (M1) node[midway, right] {\tiny$b^*$};
        \draw[very thick, blue] (Fbottom) -- (M5) node[midway, right] {\tiny$b^*$};
        \draw[very thick, blue] (Fright) -- (M0) node[midway, below right] {\tiny$c^*$};
        \draw[very thick, blue] (Fleft) -- (M4) node[midway, below left] {\tiny$c^*$};
        \draw[very thick, blue] (Ftop) -- (Fleft) node[midway, above] {\tiny$u_2^*$};
        \draw[very thick, blue] (Fbottom) -- (Fright) node[midway, below ] {\tiny$u_4^*$};

        \draw[thick, dashed, gray] (Ftop) -- (M2) node[midway, above] {\tiny$a^*$};
        \draw[thick, dashed, gray] (Fbottom) -- (M6) node[midway, below] {\tiny$a^*$};
        \draw[thick, dashed, gray] (Fright) -- (M7) node[midway, below] {\tiny$d^*$};
        \draw[thick, dashed, gray] (Fleft) -- (M3) node[midway, below] {\tiny$d^*$};
        \draw[thick, dashed, gray] (Ftop) -- (Fright) node[midway, below ] {\tiny$u_1^*$};
        \draw[thick, dashed, gray] (Fbottom) -- (Fleft) node[midway, below left] {\tiny$u_3^*$};

        \filldraw[blue] (Ftop) circle (2pt);
        \filldraw[blue] (Fleft) circle (2pt);
        \filldraw[blue] (Fbottom) circle (2pt);
        \filldraw[blue] (Fright) circle (2pt);
 
    \end{scope}

\end{tikzpicture}
\caption{A cellular graph $G$ on a genus $2$ translation surface with $V(G)=\{v_1, v_2\}$, $E(G) = \{a, b, c, d, u_1, u_2, u_3, u_4\}$ and $4$ faces. The subgraph $G_A$ is formed by $A=\{a,d,u_1,u_3\}$ (solid red), while edges in $\overline{A}$ are dashed gray. $G^*_{\overline{A}}$ with $E(G^*_{\overline{A}}) =\{b^*, c^*, u_2^*, u_4^*\}$ (solid blue), linking the dual vertices residing in the faces of $G$. The remaining dual edges crossing $A$ are dashed gray.}
\label{fig:G_A}
\end{figure}

We will show that both $U_A$ and $W_A$ are primitive subgroups of $H_1(S,\Z)$ and that
\[
 \max\{\rank U_A,\rank W_A\}\geq g.
\]
To this end choose a small regular neighborhood $N(G_A)$ of $G_A$ in $S$ and set
\begin{equation}
 N_A=N(G_A),
 \qquad
 N^*_{\overline A}=\overline{S\setminus\operatorname{int}(N(G_A))}.
\label{eq:NAstar}
\end{equation}
Then $G_A$ is a deformation retract of $N_A$, and $G^*_{\overline A}$ is a deformation retract of $N^*_{\overline A}$. One way to see the second assertion is to start with $G_A=G$, when the complement of a regular neighborhood of $G$ retracts to the dual vertices, one in each face. Removing a primal edge $e$ and adding the corresponding dual edge $e^*$ replaces a band of the regular neighborhood on the primal side by the dual band on the complementary side. Repeating this operation gives the stated deformation retraction for arbitrary $A$. The required statement about the homology follows from the next proposition.

\begin{proposition}\label{prop:subsurface}
Let $S$ be a closed oriented surface of genus $g\geq1$, let $N\subset S$ be a compact subsurface, possibly disconnected, and set $M=\overline{S\setminus\operatorname{int}(N)}$. Define
\[
U=\im\bigl(H_1(N,\Z)\to H_1(S,\Z)\bigr),
\qquad
W=\im\bigl(H_1(M,\Z)\to H_1(S,\Z)\bigr).
\]
Then $U$ and $W$ are primitive subgroups of $H_1(S,\Z)$ and $\rank U+\rank W=2g$. In particular,
\[
\max\{\rank U,\rank W\}\geq g.
\]
\end{proposition}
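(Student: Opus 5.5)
The plan is to prove the sharper statement
\[
W = U^{\perp}, \qquad U = W^{\perp},
\]
where $\perp$ denotes the annihilator with respect to the algebraic intersection pairing $\langle\cdot,\cdot\rangle$ on $H_1(S,\Z)$. Everything in the proposition then follows from the unimodularity of this pairing (Poincaré duality). Annihilators are automatically primitive. Indeed, if $V\subseteq H_1(S,\Z)$ is any subgroup, then $x\mapsto \langle x,\cdot\rangle|_V$ induces an injection
\[
H_1(S,\Z)/V^{\perp}\hookrightarrow \operatorname{Hom}(V,\Z),
\]
and $\operatorname{Hom}(V,\Z)$ is torsion-free. Moreover, since the pairing is unimodular, $\rank V^{\perp}=2g-\rank V$. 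Applying this with $V=U$ gives $\rank U+\rank W=2g$, hence $\max\{\rank U,\rank W\}\ge g$.

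\textbf{Step 1: the inclusion $W\subseteq U^{\perp}$.} This is elementary. A class in $U$ is represented by a cycle in $N$. After pushing it slightly into $\operatorname{int}(N)$ along a collar of $\partial N$, it becomes disjoint from any cycle in $M$, so its algebraic intersection number with any class in $W$ is zero. By the same argument with the roles exchanged, $U\subseteq W^{\perp}$.

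\textbf{Step 2: the inclusion $U^{\perp}\subseteq W$.} This is the main obstacle, and I will handle it with Lefschetz duality. Let $x\in U^{\perp}$, and let $\varphi=PD(x)\in H^1(S,\Z)$, so that $\varphi(y)=\langle x,y\rangle$ for all $y$.

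The condition $x\in U^{\perp}$ says that $\varphi$ vanishes on the image of $H_1(N,\Z)$. Since $H^1(N,\Z)=\operatorname{Hom}(H_1(N,\Z),\Z)$ by universal coefficients (there is no Ext term, because $H_0(N,\Z)$ is free), this means $\varphi|_N=0$. By the long exact sequence of the pair, $\varphi$ therefore lifts to a class in $H^1(S,N;\Z)$.

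Next, excision gives
\[
H^1(S,N;\Z)\cong H^1(M,\partial M;\Z),
\]
and Lefschetz duality for the compact oriented surface $M$ identifies $H^1(M,\partial M;\Z)\cong H_1(M,\Z)$.

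The remaining point is that the composite
\[
H_1(M,\Z)\cong H^1(M,\partial M)\cong H^1(S,N)\to H^1(S)\xrightarrow{PD^{-1}} H_1(S)
\]
is just the map induced by the inclusion $M\hookrightarrow S$. This is the standard naturality of cap product with the fundamental classes $[S]$ and $[M,\partial M]$, where $[S]$ restricts to $[M,\partial M]$ under $H_2(S)\to H_2(S,N)\cong H_2(M,\partial M)$. Granting this, $x$ lies in the image of $H_1(M,\Z)$, i.e.\ $x\in W$.

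If one wants to avoid the cap-product bookkeeping, an alternative is to argue geometrically. Represent $\varphi$ by a map $S\to S^1$ that is constant on each component of $N$. The preimage of a regular value lying away from these constants is an oriented multicurve inside $\operatorname{int}(M)$, and it represents $x$ up to sign.

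\textbf{Step 3: conclusion.} The argument of Step 2 is symmetric in $N$ and $M$, because both are compact subsurfaces with $N\cap M=\partial N=\partial M$. Hence also $W^{\perp}\subseteq U$, so $U=W^{\perp}$ and $W=U^{\perp}$. The annihilator facts recorded at the start then give primitivity of $U$ and $W$ and the rank identity.
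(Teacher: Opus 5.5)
Your proof is correct, but it takes a genuinely different route from the paper's.

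\textbf{The paper's route.} The paper argues constructively. It chooses standard handle cycles in each component of $N$ and of $M$. It then forms the bipartite incidence graph $G_{NM}$, whose edges are the curves of $\partial N$. From this graph it takes the $b=b_1(G_{NM})$ boundary curves $c_i$ attached to non-tree edges, together with transverse curves $d_i$ realizing the corresponding graph cycles. Using $\chi(S)=\chi(N)+\chi(M)$, it checks that this gives exactly $2g$ classes with the standard symplectic intersection matrix, hence a $\Z$-basis of $H_1(S,\Z)$. It then reads off that $U$ and $W$ are spanned by subsets of this basis, with $\rank U=2g_N+b$ and $\rank W=2g_M+b$.

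\textbf{Your route.} You instead show, via excision and Lefschetz duality, that $U$ and $W$ are mutual annihilators under the intersection form. Primitivity and the rank count then follow from general facts about annihilators for a unimodular pairing.

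\textbf{What each buys.} Your approach gives a sharper, coordinate-free statement, $W=U^{\perp}$ and $U=W^{\perp}$. It also sidesteps bookkeeping the paper passes over quickly. One such point is the rerouting of the $d_i$. Another is the check that the classes of the tree-edge boundary curves lie in the span of the $c_i$, so that $U$ really is generated by $\mathcal{A}_N\cup\{c_1,\dots,c_b\}$. The cost is relative duality and the naturality of cap products, or, in your geometric variant, homotopy extension and transversality. The paper's argument is more elementary and produces an explicit symplectic basis adapted to the decomposition. From it one also sees directly that $U\cap W$ is spanned by $c_1,\dots,c_b$. Both arguments ultimately rest on unimodularity of the intersection pairing.
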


\begin{proof}
We first describe $N$, $M$ and their boundaries with respect to their connected components:
\[
N=N_1\sqcup\cdots\sqcup N_n,
\qquad
M=M_1\sqcup\cdots\sqcup M_m,
\text{ \ and \ } \partial N=\partial M=C_1\sqcup\cdots\sqcup C_l.
\]
Let $g(N_i)$ and $g(M_j)$ be the genus of $N_i$ and $M_j$, respectively. We set 
\[
    g_N=\sum_{i=1}^n g(N_i) \quad \text{ \  and \ } g_M=\sum_{j=1}^m g(M_j).
\]
Our goal is to construct a basis of the homology $H_1(S,\Z)$ from simple closed curves in $M$ and $N$ including the boundary curves plus additional curves on the surface. We will then obtain our proposition by simply counting the curves from $N$ and $M$ we used in our construction. To this end, for every component $N_i$, choose a standard set of handle cycles
\[
\mathcal{A}_N := \{ a_{i,1},b_{i,1},\ldots,a_{i,g(N_i)},b_{i,g(N_i)} \}
\]
in its interior, satisfying
\begin{equation}
\inter(a_{i,k}, b_{i,k})=1 \text{ \ and \ } \inter(b_{i,k}, a_{i,k})=-1   \text{ \ for all \ } k \in \{ 1,2,\ldots,g(N_i) \}.
\label{eq:inter_ab} 
\end{equation}

\begin{figure}[htbp]
    \centering
    \begin{tikzpicture}[scale=1.1, every node/.style={scale=0.9}]
        
        
        \draw[thick, fill=blue!10, draw=blue!80!black] 
            (-2, 1.5) -- (-4, 1.5) arc (90:270:1.5) -- (-2, -1.5) -- 
            (-2, -0.5) to[out=180, in=180, looseness=2] (-2, 0.5) -- cycle;
            
        \draw[thick, fill=green!10, draw=green!50!black] 
            (-2, 1.5) to[out=0, in=180] (2, 0.8) -- (2, -0.8) 
            to[out=180, in=0] (-2, -1.5) -- 
            (-2, -0.5) to[out=0, in=0, looseness=2] (-2, 0.5) -- cycle;

        \draw[thick, fill=orange!10, draw=orange!80!black] 
            (2, 0.8) -- (3.5, 0.8) arc (90:-90:0.8) -- (2, -0.8) -- cycle;

        
        \draw[red, very thick] (-2, 1.5) arc (90:270:0.2 and 0.5);
        \draw[red, very thick, dashed] (-2, 0.5) arc (-90:90:0.2 and 0.5);
        \node[red, above] at (-2, 1.6) {$C_1 = c_1$};

        \draw[red, very thick] (-2, -0.5) arc (90:270:0.2 and 0.5);
        \draw[red, very thick, dashed] (-2, -1.5) arc (-90:90:0.2 and 0.5);
        \node[red, below] at (-2, -1.6) {$C_2$};

        \draw[red, very thick] (2, 0.8) arc (90:270:0.2 and 0.8);
        \draw[red, very thick, dashed] (2, -0.8) arc (-90:90:0.2 and 0.8);
        \node[red, above] at (2, 0.9) {$C_3$};

        
        \begin{scope}[shift={(-4,0)}]
            \draw[thick] (-0.3, 0.1) to[out=-60, in=240] (0.3, 0.1);
            \draw[thick] (-0.2, -0.05) to[out=60, in=120] (0.2, -0.05);
            
            \draw[blue!80!black, thick, ->] (0, -0.05) arc (90:0:0.2 and 0.725);
            \draw[blue!80!black, thick] (0.2, -0.775) arc (0:-90:0.2 and 0.725);
            \draw[blue!80!black, thick, dashed] (0, -1.5) arc (270:90:0.2 and 0.725);
            \node[blue!80!black, right] at (0.2, -1.2) {$a_{1,1}$};
            
            \draw[blue!80!black, thick, ->] (1.1, 0) arc (0:90:1.1 and 0.8);
            \draw[blue!80!black, thick] (0, 0.8) arc (90:360:1.1 and 0.8);
            \node[blue!80!black, left] at (0.8, 1) {$b_{1,1}$};
            
            \fill[blue!80!black] (0.2, -0.786) circle (1.5pt);
        \end{scope}

        \begin{scope}[shift={(3.2,0)}]
            \draw[thick] (-0.3, 0.1) to[out=-60, in=240] (0.3, 0.1);
            \draw[thick] (-0.2, -0.05) to[out=60, in=120] (0.2, -0.05);
            
            \draw[orange!80!black, thick, ->] (0, -0.05) arc (90:0:0.15 and 0.375);
            \draw[orange!80!black, thick] (0.15, -0.425) arc (0:-90:0.15 and 0.375);
            \draw[orange!80!black, thick, dashed] (0, -0.8) arc (270:90:0.15 and 0.375);
            \node[orange!80!black, right] at (0.15, -0.6) {$a_{2,1}$};
            
            \draw[orange!80!black, thick, ->] (0.7, 0) arc (0:90:0.7 and 0.5);
            \draw[orange!80!black, thick] (0, 0.5) arc (90:360:0.7 and 0.5);
            \node[orange!80!black, right] at (0.3, 0.5) {$b_{2,1}$};
            
            \fill[orange!80!black] (0.15, -0.435) circle (1.5pt);
        \end{scope}

        \draw[purple, very thick, ->] (-1.2, 0) to[out=90, in=0] (-2, 1);
        \draw[purple, very thick] (-2, 1) to[out=180, in=90] (-2.8, 0) 
            to[out=-90, in=180] (-2, -1) to[out=0, in=-90] (-1.2, 0);
        \node[purple, right] at (-1.2, 0) {$d_1$};
        \fill[purple] (-2, 1) circle (1.5pt); 

        \node[blue!80!black] at (-4, 1.8) {\textbf{$N_1$ ($g_{N_1}=1$)}};
        \node[green!50!black] at (0.3, 0.3) {\textbf{$M_1$ ($g_{M_1}=0$)}};
        \node[orange!80!black] at (3.5, 1.2) {\textbf{$N_2$ ($g_{N_2}=1$)}};

    \end{tikzpicture}

    \vspace{0.2cm} 
    
    \begin{tikzpicture}[scale=1.5]
        \node[draw, circle, blue!80!black, fill=blue!10, thick, minimum size=0.8cm] (N1) at (-3,0) {$N_1$};
        \node[draw, circle, green!50!black, fill=green!10, thick, minimum size=0.8cm] (M1) at (0,0) {$M_1$};
        \node[draw, circle, orange!80!black, fill=orange!10, thick, minimum size=0.8cm] (N2) at (3,0) {$N_2$};
        
        \draw[very thick, red, dashed] (N1) to[out=35, in=145] node[midway, above=2pt] {$e_1 = C_1$ (non-tree)} (M1);
        \draw[very thick, blue] (N1) to[out=-35, in=-145] node[midway, below=2pt] {$C_2$ (tree)} (M1);
        \draw[very thick, blue] (M1) -- node[midway, above] {$C_3$ (tree)} (N2);
        
        \node[above] at (0, 1.2) {\textbf{Bipartite incidence graph $G_{NM}$}};
        \node[below] at (0, -1.2) {Spanning tree $T = \{C_2, C_3\}$, \ Betti number $b_1(G_{NM}) = 1$};
    \end{tikzpicture}
    
    \caption{A $g=3$ surface partitioned into three subsurfaces. The set of subsurfaces $N$ consists of $N_1$ and $N_2$ (both genus $1$), while the subsurface $M$ is a single genus $0$ pair of pants $M_1$. The basis of $H_1(S,\Z)$ requires the handle cycles $\{a_{1, 1},b_{1, 1},a_{2, 1},b_{2, 1}\}$ plus the non-tree cycle boundary $c_1=C_1$ and its dual loop $d_1$. Following the proposition: $\rank U = 5$ and $\rank W = 1$.}
    \label{fig:homology_basis_g3}
\end{figure}

Furthermore all other intersections among these handle cycles are equal to zero. Similarly, in every component $M_j$, choose standard handle cycles
\[
\mathcal{A}_M: = \{ \alpha_{j,1},\beta_{j,1},\ldots,
\alpha_{j,g(M_j)},\beta_{j,g(M_j)} \}.
\]
Since the $N$- and $M$-handle cycles lie in disjoint interiors, their mutual intersection numbers vanish. They are also disjoint from all the boundary curves $(C_j)_j$.\\ 
We now construct a graph $G_{NM}$ that stores how the different parts of $N$ and $M$ are connected with each other. Let $G_{NM}$ be the bipartite incidence graph whose vertices are the components of $N$ and $M$ and whose edges correspond to the boundary curves $(C_i)_{i=1,\ldots,l}$. Here the edge associated with $C_j$ joins the component of $N$ and the component of $M$ having $C_j$ in their common boundary. Since $S$ is connected, $G_{NM}$ is connected. Choose a spanning tree $T\subset G_{NM}$ and let $(e_i)_i$ be the edges that are not in $T$. The number of these edges is equal to the first Betti number $b_1(G_{NM})$ of the graph $G_{NM}$
\begin{equation}
b:=  b_1(G_{NM}) =l-(n+m)+1  \quad \text{ \ and  \ }  \quad  G_{NM} = T \cup \{ e_1,\ldots,e_b \}.
\label{eq:b_1GNM}
\end{equation}
We now complete the handle cycles from $N$ and $M$ to a basis of the homology $H_1(S,\Z)$ of $S$ using cycles from the graph $G_{NM}$ and the corresponding boundary components of $N$ and $M$.  For every $e_i \notin T$, the graph $T \cup e_i$ contains a unique graph cycle $\tilde{d}_i$. Let $c_i$ denote the boundary curve corresponding to the non-tree edge $e_i$. We realize $\tilde{d}_i$ by a closed curve $d_i$ on $S$ in the following way. We choose $d_i$ such that it intersects $c_i$ exactly once and no other $c_j$. Furthermore $d_i$ should not intersect the handle cycles in $N$ and $M$ or any other $d_j$. This can always be achieved by rerouting it appropriately in the connected components. After choosing orientations,
\begin{equation}
\inter([c_i],[d_j])=  \delta_{ij},    \text{ \ for all \ } i,j \in \{1,2,\ldots,b \} 
\label{eq:inter_cd}
\end{equation}
In particular, neither $[c_i]$ nor $[d_i]$ is trivial in
$H_1(S,\Z)$. Let 
\begin{equation}
\mathcal{A} := \mathcal{A}_N \cup \mathcal{A}_M \cup \{ c_1,d_1,\ldots, c_b,d_b\} 
\label{eq:hom_basis_SGNM}
\end{equation}
There are exactly $2g$ curves in $\mathcal{A}$. This can be seen in the following way: Since $S$ is the union of $N$ and $M$ and as $N\cap M$ is a union of simple closed curves, we have for the Euler characteristic $\chi(S)$ of $S$:
\[
2-2g = \chi(S)=\chi(N)+\chi(M) = (2n-2g_N -l) +  (2m-2g_M -l).
\]
Since by \eqref{eq:b_1GNM} $l=b+n+m-1$, we have $g=g_N+g_M+b$ and $\mathcal{A}$ in \eqref{eq:hom_basis_SGNM} contains exactly $2g$ homology classes. Furthermore, it follows from \eqref{eq:inter_ab} and \eqref{eq:inter_cd} that up to reordering and reorienting of curves, the intersection matrix $J$ of the elements of $\mathcal{A}$ is the standard symplectic
matrix
\[
J=
\begin{pmatrix}
0&I_g\\
-I_g&0
\end{pmatrix}.
\]
Therefore the classes in $\mathcal{A}$ form a $\Z$-basis of $H_1(S,\Z)$. For instance, Figure~\ref{fig:homology_basis_g3} depicts the construction of the bipartite incidence graph $G_{NM}$ alongside a homology basis for a surface of genus $3$. Moreover, $U$ is generated by $\mathcal{A}_N \cup  \{ c_1,\ldots, c_b\}$ and $W$ is generated by $\mathcal{A}_M \cup  \{ c_1,\ldots, c_b\}$ and both $U$ and $W$ are primitive subgroups. It follows that
\[
\rank U=2g_N+b,
\quad \text{ \ and \ } \quad
\rank W=2g_M+b, \quad \text{ \ so \ } \rank U + \rank W = 2g.
\]
Therefore at least one of $U$ and $W$ has rank at least $g$. This proves our proposition. 
\end{proof}

We now return to the mixed graph. To construct $\Gmin$, we choose exactly one edge from each dual pair $(e,e^*)$. If $A$ is the set of chosen primal edges of $G_V$, then
\[
\Gmin=G_A\cup G^*_{\overline A}.
\]
The graph may be disconnected, but its homology image contains both $U_A$ and $W_A$ (see \eqref{eq:UW}).

\begin{theorem}[Mixed-graph half-rank theorem]\label{thm:halfrank}
Let $G$ be a cellular graph on a closed oriented surface $S$ of genus $g\geq1$, and let $G^*$ be a dual graph of $G$. Every mixed graph $G_{mix}$ obtained by choosing exactly one edge from each dual edge pair satisfies
\[
\rank \left( \im\bigl(H_1(G_{mix},\Z)\to H_1(S,\Z)\bigr)\right) \geq g.
\]
Moreover $\im\bigl(H_1(G_{mix},\Z)\to H_1(S,\Z)\bigr)$ is a primitive subgroup of $H_1(S,\Z)$.
\end{theorem}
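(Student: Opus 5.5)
The plan is to reduce the statement to \Cref{prop:subsurface} applied to the decomposition of $S$ into $N_A$ and $N^*_{\overline A}$ from \eqref{eq:NAstar}.

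Let $A\subset E(G)$ be the set of primal edges chosen in $G_{mix}$. Then $G_{mix}=G_A\cup G^*_{\overline A}$, and these two subgraphs are disjoint, since a primal edge meets only its own dual edge and that edge is excluded. Hence $H_1(G_{mix},\Z)=H_1(G_A,\Z)\oplus H_1(G^*_{\overline A},\Z)$, and
\[
\im\bigl(H_1(G_{mix},\Z)\to H_1(S,\Z)\bigr)=U_A+W_A
\]
with the notation of \eqref{eq:UW}.

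Next I would identify $U_A$ and $W_A$ with the groups $U$ and $W$ of \Cref{prop:subsurface} for $N=N_A$ and $M=N^*_{\overline A}$. This uses the deformation retractions $N_A\simeq G_A$ and $N^*_{\overline A}\simeq G^*_{\overline A}$ stated before the proposition. The inclusions commute with these retractions up to homotopy inside $S$, so the images in $H_1(S,\Z)$ coincide. I would check the degenerate cases separately. If $A=\emptyset$, then $N_A$ is empty or a union of disks around vertices, $U_A=0$, and $M$ is everything. If $A=E(G)$, the situation is symmetric. In both cases $G_{mix}$ is $G$ or $G^*$, which carries all of $H_1(S,\Z)$ because the graph is cellular. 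With this identification, \Cref{prop:subsurface} gives $\rank(U_A+W_A)\ge\max\{\rank U_A,\rank W_A\}\ge g$.

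The main obstacle is primitivity. The proposition only says that $U$ and $W$ are each primitive, and a sum of primitive subgroups need not be primitive. So I would go back into the proof of \Cref{prop:subsurface}. There $\mathcal A=\mathcal A_N\cup\mathcal A_M\cup\{c_1,d_1,\dots,c_b,d_b\}$ is a $\Z$-basis of $H_1(S,\Z)$. Moreover $U$ is generated by $\mathcal A_N\cup\{c_i\}$ and $W$ by $\mathcal A_M\cup\{c_i\}$. Therefore $U+W$ is generated by $\mathcal A_N\cup\mathcal A_M\cup\{c_1,\dots,c_b\}$, which is a subset of a $\Z$-basis. The quotient $H_1(S,\Z)/(U+W)$ is then free on the classes of $d_1,\dots,d_b$, so $U+W$ is primitive.

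As a byproduct, the rank equals $2g_N+2g_M+b=2g-b$. Since $g=g_N+g_M+b$ forces $b\le g$, this gives a second proof of the bound $\ge g$.

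The one point needing care is the claim that $U$ and $W$ are exactly generated by those curves, and not merely contain them. I would justify it by noting that each component $N_i$ has $H_1$ generated by its handle cycles together with its boundary curves. Boundary curves corresponding to tree edges of $G_{NM}$ are $\Z$-combinations of the non-tree curves $c_i$ and of boundaries of entire components, which vanish in $H_1(S,\Z)$. I would verify this by peeling leaves off the spanning tree $T$.
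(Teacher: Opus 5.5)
Your proposal is correct. The rank argument is exactly the paper's: identify $U_A,W_A$ with the groups of \Cref{prop:subsurface} for the pair $(N_A,N^*_{\overline A})$, then use $\rank(U_A+W_A)\ge\max\{\rank U_A,\rank W_A\}$.

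The difference is in how you get primitivity. The paper sidesteps the fact that a sum of primitive subgroups need not be primitive by applying \Cref{prop:subsurface} a second time, to a different subsurface. It takes the single, disconnected subsurface $N(G_{mix})=N(G_A)\sqcup N(G^*_{\overline A})$. Since this deformation retracts onto $G_{mix}$, the group $U$ of the proposition is literally $\im\bigl(H_1(G_{mix},\Z)\to H_1(S,\Z)\bigr)$, and primitivity follows from the proposition used as a black box.

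You instead keep the complementary pair $(N_A,N^*_{\overline A})$ and reopen the proof. You observe that $U_A+W_A$ is spanned by $\mathcal A\setminus\{d_1,\dots,d_b\}$, a subset of a $\Z$-basis. This costs you the exact-generation claim for $U$ and $W$. You justify it correctly: $H_1$ of each component is generated by its handle cycles and boundary curves, and peeling leaves of $T$ with the vertex relations of $G_{NM}$ writes the tree-edge curves in terms of the $c_i$. The paper's own proof of \Cref{prop:subsurface} asserts the same generation statement without comment, so the underlying dependencies are the same; you are just making that step explicit.

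In return your route yields more than the theorem asks. Primitivity and the rank bound come out of one computation. It gives the exact value $\rank=2g_N+2g_M+b=2g-b_1(G_{NM})\ge g$, and an explicit free complement spanned by the classes of the $d_i$. The paper's route is shorter, but it only gives the inequality.
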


We obtain Lemma \ref{lem:Gmin_intro} in the introduction from this theorem and Proposition \ref{prop:mixedlength}, 2.).

\begin{proof}
Setting
\[
N=N(G_A), \qquad  M=\overline{S\setminus\operatorname{int}(N)},
\qquad U=U_A, \qquad  W=W_A,
\]
Proposition \ref{prop:subsurface} and the deformation retractions above imply
\[
\max\{\rank U_A,\rank W_A\}\geq g.
\]
Since both $U_A$ and $W_A$ are contained in the homology image of $G_{mix}$, the rank assertion follows. For primitivity, let $N_{mix} = N(G_{mix})$ be a regular neighborhood of the entire embedded graph $G_{mix}$. Since $N_{mix}$ deformation retracts onto $G_{mix}$, Proposition \ref{prop:subsurface}, applied to $N_{mix}$, shows directly that the image of $H_1(G_{mix},\Z)$ in $H_1(S,\Z)$ is primitive.
\end{proof}

\section{Systolic estimates for metric graphs}\label{sec:graph}

We now provide the graph-theoretic ingredients for the length estimates. We recall that the \textit{systole} $\sys(G)$, or \textit{girth}, of a metric graph is the length of a shortest noncontractible simple cycle. The following result is due to Bollobas and Szemeredi \cite{bsz}, improving an earlier estimate of Bollobas and Thomason \cite{bt}.

\begin{theorem}[Bollobas-Szemeredi-Thomason]\label{thm:bst}
Let $G$ be a connected metric graph with first Betti number $b\geq1$ and total edge length $L(G)=L$. Then
\[
\sys(G)\leq 4\frac{L}{b}\log(b+1).
\]
\end{theorem}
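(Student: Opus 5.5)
The statement is the Bollobás--Szemerédi result cited from \cite{bsz}. If I were to prove it rather than quote it, I would argue as follows.

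\textbf{Reduction to minimum degree three.} First I reduce to a graph with minimum degree at least three. Repeatedly deleting vertices of degree one, together with their edges, lowers $L$ and leaves $b$ and $\sys(G)$ unchanged. Next, every vertex of degree two is suppressed by merging its two incident edges into one edge whose length is the sum. This changes neither the metric space, nor $L$, nor $b$, nor $\sys(G)$. Since $b\ge 1$, the process ends either with a single loop, where $\sys(G)=L$ and the bound is immediate, or with a graph having $v$ vertices and $e$ edges with all degrees $\ge 3$. Then $3v\le 2e$ and $e-v=b-1$, so $e\le 3(b-1)$ and $v\le 2(b-1)$. The average edge length is $\lambda=L/e\ge L/(3b)$, so it suffices to bound $\sys(G)$ by a multiple of $\lambda\log(b+1)$ and track the constant.

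\textbf{Counting non-backtracking paths inside a ball.} Set $r=\sys(G)/2$. For any point $x$, the set of points at distance $<r$ from $x$ contains no cycle, since a cycle there would have length $<2r$. Hence the non-backtracking paths of length $<r$ starting at $x$ are geodesic and pairwise distinct, and each ends on a different directed edge. So there are at most $2e$ of them that end at a vertex, counting the directed edge used to arrive.

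The expansion comes from non-backtracking random walks on directed edges. The uniform measure on the $2e$ directed edges is stationary for the walk that, on arriving at a vertex of degree $d$, leaves along one of the $d-1$ other edges chosen uniformly. Started from the stationary measure, a $k$-step walk has
\begin{itemize}
\item expected total length exactly $k\lambda$, by stationarity;
\item path entropy at least $k\log 2$, since every branching factor is $d-1\ge 2$.
\end{itemize}
By Markov's inequality, with probability at least $1/2$ the walk has length at most $2k\lambda$. On that event, if $2k\lambda<r$, the walk's path is one of at most $2e$ paths from its initial directed edge. A conditional entropy estimate, as in the Alon--Hoory--Linial irregular Moore bound, then gives roughly $k\log 2\lesssim 2\log(2e)+O(1)$. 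Taking $k\approx C\log(b+1)$ large enough to contradict this forces $r\le 2k\lambda$. This yields
\[
\sys(G)\le 4k\lambda \lesssim c\,\frac{L}{b}\log(b+1).
\]

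\textbf{Main obstacle.} The hard step is making this entropy versus length trade-off rigorous with the sharp constant $4$. Markov only controls the length in expectation. Uneven edge lengths mean some branches exit the ball early, so counting leaves of the explored tree is not directly monotone. My first attempt would be to work with the exploration tree of radius $r$ weighted by the non-backtracking measure, and prove by induction on depth that its total length is at least $\lambda\cdot 2^{\text{(number of branchings)}}$ in weighted average. I would then compare this with the bound $L$ on the tree's total length. If the constant cannot be matched this way, I would fall back on citing \cite{bsz} and \cite{bt} for the precise constant, as the paper does.
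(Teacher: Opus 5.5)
Your pruning and suppression steps are fine, apart from two slips. First, the inequality you need is the upper bound $\lambda\le L/b$, which holds because $e=b-1+v\ge b$; the bound $\lambda\ge L/(3b)$ points the wrong way. Second, a cycle lying in $B(x,r)$ need not be short. The ball is a tree for a different reason: two distinct geodesics from $x$ to a point at distance $<r$ would close up to a noncontractible loop of length $<2r=\sys(G)$.

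The genuine gap is the step you flag, and the problem is not the constant; the inference itself fails. Staying inside the embedded tree ball is a pathwise constraint. After suppressing degree-two vertices, however, you control the length of a $k$-step walk $P$ only in expectation. If $2k\lambda<r$, Markov gives $\Pr(A)\ge 1/2$ for $A=\{\ell(P)<r\}$, and on $A$ the walk is one of at most $2e$ paths from its initial directed edge. But the lower bound $H(P)\ge\log(2e)+k\log 2$ concerns the whole law of $P$, and the entropy may be carried by $A^c$. The chain rule only yields
\[
\log(2e)+k\log 2\le H(P)\le \log 2+2\Pr(A)\log(2e)+\Pr(A^c)\bigl(\log(2e)+k\log(\Delta-1)\bigr),
\]
where $\Delta$ is the maximal degree. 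With $\Pr(A^c)$ as large as $1/2$, this holds for every $k$ once $\Delta\ge 5$. So the estimate $k\log 2\lesssim 2\log(2e)+O(1)$ is not established, and the suggested induction on a weighted exploration tree is not yet an argument. Falling back on the citation would leave you with no proof of your own.

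The missing idea is to go the other way; this is what the paper's phrase ``by subdivision of the edges'' refers to. Keep the pruning, but do not suppress degree-two vertices. Instead, approximate the lengths by rationals ($L$ and $\sys$ are continuous in them) and subdivide every edge into pieces of a common length $\varepsilon$. The subdivided graph $G'$ has $n'$ vertices, $e'=L/\varepsilon$ edges and $e'-n'=b-1$. In $G'$ a walk of $i$ steps has length exactly $i\varepsilon$. The irregular branching now enters only through the entropy, for which averages suffice. Concretely, the number $W_i$ of non-backtracking walks with $i$ edges satisfies $W_i\ge 2e'\exp\bigl((i-1)\Sigma/(2e')\bigr)$, where $\Sigma=\sum_v d_v\log(d_v-1)\ge 6\log 2\,(b-1)$.

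One must then count as Alon--Hoory--Linial do, summing over all radii $i\le r$ and all centres. When the girth of $G'$ is at least $2r+1$, this gives $2e'\sum_{i=0}^{r-1}e^{i\Sigma/(2e')}\le n'(n'-1)$. Counting endpoints among the $n'$ vertices from a single start would instead replace $\log(1+\Sigma/4)$ below by $\log n'$, which diverges as $\varepsilon\to0$. Letting $\varepsilon\to 0$ yields $\sys(G)\le\frac{4L}{\Sigma}\log(1+\Sigma/4)$ for $b\ge 2$ (the case $b=1$ is trivial). This implies the stated inequality with room to spare.

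The paper itself runs no such argument. It feeds the subdivided graph into the combinatorial girth bound of Bollob\'as--Szemer\'edi and quotes Sabourau for the metric form with constant $4$.
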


This theorem follows from the girth estimate of Bollobas-Szemeredi \cite{bsz} by subdivision of the edges. The simplified statement above for metric graphs can be deduced directly from \cite{sa}, Theorem 2.2 and (2.3). Before applying it, we isolate the integral homology core that will be used below.

\begin{lemma}[Primitive homology core]\label{lem:core}
Let $G$ be a finite metric graph, possibly disconnected, embedded in a closed oriented surface $S$, and set
\[
r=\rank \im
\bigl(H_1(G,\Z)\longrightarrow H_1(S,Z)\bigr)\geq 1.
\]
Then there exists a subgraph $G_c\subseteq G$ such that $L(G_c)\leq L(G)$ and  $b_1(G_c)=r$ and the map
\[
 H_1(G_c,\Z)\longrightarrow H_1(S,\Z)
\]
induced by the inclusion $G_c \hookrightarrow S$ is injective and has primitive image. More precisely, $G_c$ may be chosen as the union of $r$ simple fundamental graph cycles whose images can be extended to a basis of $H_1(S,\Z)$.
\end{lemma}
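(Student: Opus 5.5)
The plan is to build $G_c$ greedily from fundamental cycles, one component of $G$ at a time. In each connected component $G^{(k)}$ of $G$, choose a spanning tree $T_k$. Every non-tree edge $e$ closes a unique simple fundamental cycle $z_e\subset T_k\cup e$. The classes of these cycles over all non-tree edges and all components generate $H_1(G,\Z)$. Hence their images generate the image subgroup, which we call $I$. By \Cref{thm:halfrank}, or more generally \Cref{prop:subsurface} applied to a regular neighborhood of $G$, the subgroup $I$ is primitive in $H_1(S,\Z)$. This hypothesis holds in our setting; for a general embedded graph, primitivity of the image is again given by \Cref{prop:subsurface}.

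Next, extract $r$ of the fundamental cycles, $z_1,\dots,z_r$, whose images are linearly independent over $\mathbb{Q}$. This is possible because the images span $I\otimes\mathbb{Q}$, which has dimension $r$. Set $G_c=\bigcup_i z_i$. Then $G_c\subseteq G$, so $L(G_c)\le L(G)$.

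To get $b_1(G_c)=r$ and injectivity, I would argue as follows. Each $z_i$ contains exactly one non-tree edge $e_i$, and all other edges of $G_c$ lie in the forest $\bigcup_k T_k$. So $G_c$ is a forest plus $r$ edges, and each component of $G_c$ has a spanning tree consisting of forest edges. Therefore $b_1(G_c)=r$, and $H_1(G_c,\Z)$ is free with basis $[z_1],\dots,[z_r]$. Their images are independent, so the map $H_1(G_c,\Z)\to H_1(S,\Z)$ is injective.

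The main obstacle is primitivity of the image $I_c=\langle [z_1],\dots,[z_r]\rangle$. It is a full-rank subgroup of the primitive group $I$, but it could have finite index in $I$, and then it would not be primitive. For this step, I would apply \Cref{prop:subsurface} directly to a regular neighborhood of the graph $G_c$ itself. That proposition asserts that the image of the homology of any compact subsurface is primitive, so $I_c$ is primitive with no index argument needed. The proposition also shows $I_c=I$, since $I_c$ is primitive and of full rank in $I$.

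Finally, since $I_c$ is primitive and free of rank $r$ with basis $[z_1],\dots,[z_r]$, this basis extends to a $\Z$-basis of $H_1(S,\Z)$ by the definition of primitivity. This gives the ``more precisely'' clause: $G_c$ is a union of $r$ simple fundamental cycles whose classes extend to a basis.
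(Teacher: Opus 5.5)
Your proposal is correct and follows the paper's proof essentially step for step. Like the paper, you choose a spanning forest, take the fundamental cycles, and pick $r$ of them with independent images. You then count $b_1(G_c)=r$ as a forest plus $r$ non-tree edges, deduce injectivity, and get primitivity by applying \Cref{prop:subsurface} to a regular neighborhood of $G_c$ itself. Your preliminary appeal to primitivity of the image of all of $G$ is not needed, and your remark that $I_c=I$ is correct but goes beyond what the statement requires.
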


We note here that Proposition \ref{prop:subsurface} implies that, as $G$ is embedded, the image of $H_1(G,\Z)$ in $H_1(S,\Z)$ is primitive. 

\begin{proof}
Choose a spanning forest $T \subset G$. For every edge $e\in E(G)\setminus E(T)$, let $p_e\subseteq T$ be the unique simple path joining the endpoints of $e$ and set
\[
a_e=e\cup p_e.
\]
The classes $([a_e])_e$, for $e\in E(G)\setminus E(T)$, form the usual
fundamental cycle basis of $H_1(G,\Z)$. Let
\[
\Phi:H_1(G,\Z)\longrightarrow H_1(S,\Z)
\]
denote the homomorphism induced by the inclusion $G\hookrightarrow S$. Since its image has rank $r$, we can choose distinct non-tree edges $e_1,\ldots,e_r$ such that $(\Phi([a_{e_i}])_{i=1,\ldots,_r}$ are $\Z$-linearly independent. Set
\[
G_c=\bigcup_{i=1}^r a_{e_i}.
\]
Since $G_c\subseteq G$, we have $L(G_c)\leq L(G)$. The tree edges occurring in $G_c$ form a forest, and each of the edges $e_i$ adds exactly one independent cycle. Consequently, $b_1(G_c)=r$, and the classes $([a_{e_i}])_{i=1,\ldots,r}$ form a $\Z$-basis of $H_1(G_c,\Z)$. The images in $H_1(S,\Z)$ are $\Z$-linearly independent by construction. Therefore
\[
H_1(G_c,\Z)\longrightarrow H_1(S,\Z)
\]
is injective. Finally, let $N(G_c)$ be a regular neighborhood of $G_c$. Since $N(G_c)$ deformation retracts onto $G_c$, Proposition \ref{prop:subsurface} applied to $N(G_c)$ implies that
\[
\im
\bigl(H_1(G_c,\Z)\longrightarrow H_1(S,\Z)\bigr)
\]
is a primitive subgroup of $H_1(S,\Z)$. 
\end{proof}

We can apply Theorem \ref{thm:bst} iteratively. Together with Lemma \ref{lem:core} this gives short cycles that can be extended to a homology basis.

\begin{lemma}[Length bound on a primitive cycle basis]\label{lem:length_est_log}
Let $G$ be a finite metric graph, possibly disconnected, embedded in a closed oriented surface $S$. Suppose that
\[
H_1(G,\Z)\longrightarrow H_1(S,\Z)
\]
is injective. Let $b=b_1(G)\geq 1$ be the first Betti number of $G$ and $L=L(G)$ be the total length of edges of $G$. Then $G$ contains $b_1(G)$ simple graph cycles $(a_i)_{i=1,\ldots,b}$ that can be extended to a homology basis of $H_1(S,\Z)$ and which satisfy
\[
\ell(a_i)
\leq
\frac{4L}{b-i+1}\log(b-i+2),
\qquad \text{ \ for all \ } i \in \{1,2,\ldots,b \}.
\]
\end{lemma}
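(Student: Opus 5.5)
We plan to prove this by iterating Theorem \ref{thm:bst}, removing one edge at a time while keeping the graph embedded and its homology injecting into $H_1(S,\Z)$. We set $G^{(1)}=G$ and construct inductively subgraphs $G^{(1)}\supseteq G^{(2)}\supseteq\cdots\supseteq G^{(b)}$ with $b_1(G^{(i)})=b-i+1$ and $L(G^{(i)})\le L$. At step $i$ we pick a connected component $K$ of $G^{(i)}$ with $b_1(K)\ge1$. Since $G^{(i)}$ may be disconnected, we choose the component that minimizes the ratio $L(K)/b_1(K)$. By the mediant inequality this ratio is at most $L(G^{(i)})/b_1(G^{(i)})\le L/(b-i+1)$. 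Theorem \ref{thm:bst} then gives a simple cycle $a_i\subseteq K$ with
\[
\ell(a_i)\le 4\frac{L(K)}{b_1(K)}\log(b_1(K)+1)\le \frac{4L}{b-i+1}\log(b-i+2),
\]
where we use $b_1(K)\le b-i+1$ and that $\log$ is increasing. We then delete one edge $e_i$ of $a_i$ and set $G^{(i+1)}=G^{(i)}\setminus e_i$. Deleting an edge that lies on a cycle does not disconnect its component, so $b_1$ drops by exactly one and the length does not increase.

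Next we show that $[a_1],\dots,[a_b]$ form a $\Z$-basis of $H_1(G,\Z)$. The cycle $a_j$ with $j>i$ lies in $G^{(j)}$, which avoids $e_i$, while $a_i$ passes through $e_i$ exactly once because it is simple. Ordering the edges $e_1,\dots,e_b$, the coefficient matrix of the classes $[a_j]$ on these edges is therefore unitriangular with entries $\pm1$ on the diagonal. Hence the $[a_i]$ are linearly independent. To see that they generate, note that $G^{(b+1)}$ is a forest, so the $e_i$ are exactly the complement of a spanning forest. Any cycle $z$ can be reduced by subtracting, in the order $i=1,2,\dots$, the suitable multiple of $[a_i]$ that kills the coefficient on $e_i$; after $b$ steps what remains is a cycle supported in a forest, which is $0$. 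Thus the $[a_i]$ are a $\Z$-basis of $H_1(G,\Z)$.

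Finally we pass to $S$. By hypothesis $H_1(G,\Z)\to H_1(S,\Z)$ is injective, so the images of the $[a_i]$ form a $\Z$-basis of the image subgroup. By the remark after Lemma \ref{lem:core}, obtained by applying Proposition \ref{prop:subsurface} to a regular neighborhood of $G$, this image is primitive. A basis of a primitive subgroup extends to a basis of $H_1(S,\Z)$, which gives the claim. The main point requiring care is the integral-basis step: mere rational independence would not suffice for extension. The unitriangular structure coming from the deletion order is what handles this, together with the choice of component so that the ratio bound survives when the graph is disconnected.
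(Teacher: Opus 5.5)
Your proposal is correct and follows the paper's proof essentially step for step. You iterate Theorem \ref{thm:bst} on a component chosen by the mediant inequality, delete one edge $e_i$ of each selected cycle, and use the triangular pattern of coefficients on $e_1,\dots,e_b$ to get a $\Z$-basis of $H_1(G,\Z)$; your explicit elimination argument is the paper's fundamental-cycle coordinates with respect to the final forest, phrased directly, and you then conclude via injectivity and the primitivity supplied by Proposition \ref{prop:subsurface}.
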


\begin{proof}
Set $G_1=G$. At the $i$-th stage let $G_i$ be the remaining graph. We will have
\[
b_1(G_i)=b-i+1,
\qquad
L(G_i)\leq L.
\]
Let $\widetilde L_i\leq L(G_i)$ be the combined length of the connected components $(C_j)_j$ of $G_i$ with positive first Betti number. At least one such component $C_i$ satisfies
\[
\frac{L(C_i)}{b_1(C_i)}
\leq
\frac{\widetilde L_i}{b_1(G_i)} \leq \frac{ L(G_i)}{b_1(G_i)}.
\]
Indeed, otherwise summing the strict opposite inequalities over all components with positive first Betti number gives a contradiction in the first inequality. Applying Theorem \ref{thm:bst} to $C_i$, choose a simple cycle $a_i$ with
\[
\ell(a_i)
\leq
4\frac{L(C_i)}{b_1(C_i)}\log\bigl(b_1(C_i)+1\bigr)
\leq 4\frac{L}{b-i+1}\log(b-i+2).
\]
Choose an edge $e_i$ of $a_i$ and delete its interior. Since $e_i$ lies on a cycle, this does not change the number of connected components and lowers the total first Betti number by exactly one. Continue until $a_1,\ldots,a_b$ have been selected. After the $b$ deletions, the remaining graph $G_{b+1}$ is a spanning forest of $G$, and
\[
E(G)\setminus E(G_{b+1})=\{e_1,\ldots,e_b\}.
\]
Taking $G_{b+1}$ as the spanning forest, let $a_{e_i}$ be the fundamental cycle determined by the edge $e_i$. Then $\mathcal{A}_G=\{a_{e_1},\ldots,a_{e_b}\} $ is a basis of $H_1(G,\mathbb Z)$. For each $i$, $a_i\subset G_i$, hence $a_i$ contains none of $e_1,\ldots,e_{i-1}$. Since $a_{e_j}$ is the unique element of $\mathcal{A}_G$ containing the non-tree edge $e_j$, the coordinate matrix of $[a_1],\ldots,[a_b]$ with respect to $\mathcal{A}_G$ is upper triangular with diagonal entries $\pm1$. It is therefore unimodular, and the $([a_i])_i$ also form a $\Z$-basis of $H_1(G,\Z)$. Since the homology map is injective, their images form a $\Z$-basis of the image $\im \bigl(H_1(G,\Z)\longrightarrow H_1(S,\Z)\bigr)$. Since $G$ is embedded in $S$, Proposition \ref{prop:subsurface} applied to a regular neighborhood of $G$ shows that this image is a primitive subgroup. Hence the classes $([a_i])_{i=1,\dots,b}$ form a basis of a primitive subgroup of $H_1(S,\Z)$ and can be extended to a basis of $H_1(S,\Z)$.
\end{proof}

We now apply Lemmas \ref{lem:core} and \ref{lem:length_est_log} to the mixed shorter-edge graph $\Gmin$.

\begin{theorem}[Half-rank log bound]\label{thm:mixedmain}
Let $S$ be a closed translation surface of genus $g\geq2$ with $\Area(S)=4\pi g$. Let $V$ be the Voronoi graph on $S$ whose seeds are the cone points, let $N$ be the number of edges of $V$, and let $\Gmin$ be the mixed shorter-edge graph. Then $\Gmin$ contains $g$ simple graph cycles $(\alpha_i)_{i=1,\ldots,g}$ whose homology classes generate a primitive subgroup of rank $g$ in $H_1(S,\Z)$, and hence can be extended to a $\Z$-basis of $H_1(S,\Z)$, such that
\[
\ell(\alpha_i)
\leq
\frac{4\sqrt{2N\Area(S)}}{g-i+1}\log(g-i+2)
\leq
\frac{70g}{g-i+1}\log(g-i+2),
\qquad \text{ \ for all \ } i \in \{1,2,\ldots,g \}.
\]
\end{theorem}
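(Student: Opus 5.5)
The plan is to chain \Cref{thm:halfrank}, \Cref{lem:core} and \Cref{lem:length_est_log}, and then insert the length bound of \Cref{prop:mixedlength}, 2.) together with the edge count of \Cref{prop:edgecount}.

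First, the Voronoi graph $G_V$ and its dual $G_D$ are cellular and dual to each other, and $\Gmin$ is obtained by choosing exactly one edge from each dual pair. \Cref{thm:halfrank} therefore applies and gives
\[
r=\rank\im\bigl(H_1(\Gmin,\Z)\to H_1(S,\Z)\bigr)\geq g.
\]
Next, \Cref{lem:core} produces a subgraph $G_c\subseteq\Gmin$ with $b_1(G_c)=r$, $L(G_c)\le L(\Gmin)$, injective homology map and primitive image.

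To get exactly rank $g$ rather than $r$, I would pass from $G_c$ to a smaller graph. One option is to take the union of only $g$ of the fundamental cycles $a_{e_1},\dots,a_{e_g}$ in the construction of \Cref{lem:core}. The same argument there shows that this union $G'$ has $b_1(G')=g$, injects into $H_1(S,\Z)$, and has primitive image, since \Cref{prop:subsurface} applies to any embedded graph. The other option is to apply \Cref{lem:length_est_log} to $G_c$ and keep the first $g$ cycles. This also works: a subset of a basis of a primitive subgroup spans a primitive subgroup, and $\frac{4L}{r-i+1}\log(r-i+2)$ is decreasing in $r$, since $\log(x+1)/x$ decreases. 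Either way, \Cref{lem:length_est_log} yields simple cycles $\alpha_1,\dots,\alpha_g$ with
\[
\ell(\alpha_i)\le\frac{4L(\Gmin)}{g-i+1}\log(g-i+2)\le \frac{4\sqrt{2N\Area(S)}}{g-i+1}\log(g-i+2).
\]
These cycles extend to a $\Z$-basis of $H_1(S,\Z)$.

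The last step is the numerical constant. By \Cref{prop:edgecount}, $N\le 12(g-1)<12g$, and $\Area(S)=4\pi g$. Hence
\[
4\sqrt{2N\Area(S)}\le 4\sqrt{96\pi}\,g .
\]
Since $\sqrt{96\pi}\approx 17.37$, this gives $4\sqrt{96\pi}\approx 69.5\le 70$. The only point needing care is the reduction from rank $r$ to exactly $g$ while preserving primitivity and the monotonicity of the bound in the Betti number. I expect this to be straightforward with either route described above.
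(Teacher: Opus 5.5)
Your proposal is correct and follows the paper's proof. Your second option is exactly what the paper does: chain Theorem~\ref{thm:halfrank} and Lemma~\ref{lem:core}, apply Lemma~\ref{lem:length_est_log} to the rank-$r$ core $G_c$, keep the first $g$ cycles using that $\log(x+1)/x$ is decreasing, and finish with $N\le 12(g-1)$ and $\Area(S)=4\pi g$ to get $4\sqrt{96\pi}\approx 69.5\le 70$. Your first option, cutting down to $g$ fundamental cycles before applying Lemma~\ref{lem:length_est_log}, is an equally valid minor variant.
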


This is a slightly more detailed version of Theorem \ref{thm:half_rank_intro} in the introduction.

\begin{proof}
Set
\[
U=\im\bigl(H_1(\Gmin,\Z)\to H_1(S,\Z)\bigr),
\qquad
r=\rank U.
\]
By Theorem \ref{thm:halfrank}, $g\leq r\leq2g$. Lemma \ref{lem:core} gives a subgraph $G_c\subseteq\Gmin$ with $b_1(G_c)=r, L(G_c)\leq L(\Gmin)$ and the inclusion map $H_1(G_c,\Z)\longrightarrow H_1(S,\Z)$ is injective and has primitive image. Set 
\[
 U_c = \im(H_1(G_c,\Z)\longrightarrow H_1(S,\Z))
\]
Then $U_c$ is a primitive subgroup of rank $r$. We can therefore apply Lemma \ref{lem:length_est_log} to $G_c$. We obtain a $\Z$-basis $\alpha_1,\ldots,\alpha_r$ of $U_c$ satisfying
\[
\ell(\alpha_i)
\leq
\frac{4L(\Gmin)}{r-i+1}\log(r-i+2).
\]
The function $\log(x+1)/x$ is decreasing for $x\geq1$. Since $r\geq g$, for $1\leq i\leq g$ we therefore have
\[
\frac{\log(r-i+2)}{r-i+1}
\leq
\frac{\log(g-i+2)}{g-i+1}.
\]
Thus the first $g$ basis elements satisfy
\[
\ell(\alpha_i)
\leq
\frac{4L(\Gmin)}{g-i+1}\log(g-i+2).
\]
Because $U_c$ is primitive, the $([\alpha_i])_{i=1,\ldots,g}$ generate a primitive rank-$g$ subgroup of $H_1(S,\Z)$ and can be extended to a homology basis. Finally, Proposition \ref{prop:mixedlength} and Proposition \ref{prop:edgecount} give
\[
L(\Gmin)\leq\sqrt{8\pi gN}
\quad \text{ \ and \ } \quad
N\leq12(g-1),
\]
which yields the stated upper bound.
\end{proof}

Expressing this result in terms of a portion of the homology, we have:

\begin{corollary}\label{cor:proportion}
Let $S$ be a translation surface of genus $g\geq2$ with $\Area(S)=4\pi g$. Then for any $\lambda \in (0,1)$ there are $\lfloor\lambda g\rfloor$ homologically independent simple closed curves $(\alpha_i)_{i=1,\ldots,\lfloor\lambda g\rfloor}$ whose classes can be extended to a $\Z$-basis of $H_1(S,\Z)$ and which satisfy
\[
\ell(\alpha_i)
\leq
\frac{70}{1-\lambda}\log(g+2),
\quad \text{ \ for all \ }  i \in \{1,2,\ldots,\lfloor\lambda g\rfloor \}.
\]
\end{corollary}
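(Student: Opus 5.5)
The plan is to read the corollary off Theorem \ref{thm:mixedmain} by keeping only the first $k:=\lfloor\lambda g\rfloor$ of its $g$ curves and making the index-dependent bound uniform in $i\le k$. If $k=0$ there is nothing to prove, so assume $k\ge1$. Since $\lambda<1$, we have $k\le g$, so the curves $\alpha_1,\ldots,\alpha_k$ from Theorem \ref{thm:mixedmain} are available.

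First, the topological part. The $\alpha_i$ are simple graph cycles in the embedded graph $\Gmin\subset S$, hence simple closed curves on $S$. The classes $[\alpha_1],\ldots,[\alpha_g]$ form a basis of a primitive subgroup of rank $g$, so they extend to a $\Z$-basis of $H_1(S,\Z)$. Any subfamily of a $\Z$-basis is again part of that basis. Hence $[\alpha_1],\ldots,[\alpha_k]$ are homologically independent and extend to a $\Z$-basis of $H_1(S,\Z)$.

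Second, the length bound. For $1\le i\le k\le\lambda g$ we have
\[
g-i+1\ \ge\ g-\lambda g+1\ >\ (1-\lambda)g,
\qquad\text{so}\qquad
\frac{g}{g-i+1}\ <\ \frac{1}{1-\lambda}.
\]
Moreover $g-i+2\le g+1<g+2$, and $\log$ is increasing, so $\log(g-i+2)\le\log(g+2)$. Both factors are positive. Multiplying the two estimates and inserting them into the bound of Theorem \ref{thm:mixedmain} gives
\[
\ell(\alpha_i)\ \le\ \frac{70g}{g-i+1}\log(g-i+2)\ \le\ \frac{70}{1-\lambda}\log(g+2)
\qquad\text{for all } i\in\{1,\ldots,k\}.
\]

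There is no real obstacle here. The only points needing care are that $k\le g$, so the indices exist, and the elementary inequality $g-i+1>(1-\lambda)g$. Note that one could even replace $\log(g+2)$ by $\log(g+1)$; the stated form is simply a convenient uniform bound.
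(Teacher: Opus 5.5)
Your proposal is correct and follows the paper's route: the corollary is read off from Theorem~\ref{thm:mixedmain} by keeping the first $\lfloor\lambda g\rfloor$ curves. It uses $g-i+1>(1-\lambda)g$, $\log(g-i+2)\le\log(g+2)$, and the fact that a subfamily of classes extendable to a $\Z$-basis remains extendable. The paper states the corollary without writing out this computation, and your argument supplies exactly the omitted details.
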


Thus we obtain a uniform $O(\log g)$ bound for a primitive subgroup of rank arbitrarily close to one half of the homology. The cut-graph construction gives a complementary result for the full rank with a different geometric dependence.

\begin{theorem}[Short integral homology basis]\label{thm:short_basis}
Let $S$ be a closed translation surface of genus $g\geq2$ with $\Area(S)=4\pi g$, and let $\delta$ be a shortest saddle connection on $S$. Then there exist $2g$ simple closed curves $(\alpha_i)_{i=1,\ldots,2g}$ whose homology classes form a $\Z$-basis of $H_1(S,\Z)$ and such that
\[
\ell(\alpha_i)
\leq
\frac{32\pi g}{(2g-i+1)\,\ell(\delta)}\log(2g-i+2),
 \quad \text{ \ for all \ }  i \in \{1,2,\ldots,2g \}.
\]
\end{theorem}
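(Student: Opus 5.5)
We will apply the cut-graph construction of Lemma \ref{lem:cut graph} to the Voronoi graph $G_V$, and then run the iterative systolic argument of Lemma \ref{lem:length_est_log} on the resulting graph $G_f$. Since $G_V$ is a cellular graph on $S$ (its faces are the Voronoi regions, which are topological disks), Lemma \ref{lem:cut graph} gives a connected subgraph $G_f\subseteq G_V$ with $b_1(G_f)=2g$, with $S\setminus G_f$ an open disk, and with $L(G_f)\le L(G_V)$. By Proposition \ref{prop:mixedlength}, 1.) and the normalization $\Area(S)=4\pi g$,
\[
L(G_f)\le L(G_V)\le \frac{2\Area(S)}{\ell(\delta)}=\frac{8\pi g}{\ell(\delta)}.
\]

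Next we show that $H_1(G_f,\Z)\to H_1(S,\Z)$ is an isomorphism. Surjectivity follows from $\pi_1(G_f)\twoheadrightarrow\pi_1(S)$ in Lemma \ref{lem:cut graph}, after abelianizing. For injectivity, note that both groups are free abelian of rank $2g$. A surjection $\Z^{2g}\to\Z^{2g}$ is an isomorphism, since a surjective endomorphism of a finitely generated free abelian group (equivalently, of a Noetherian module) is injective. Alternatively, $S$ is obtained from $G_f$ by attaching a single $2$-cell. Its boundary word is a product of commutators, so it is zero in $H_1(G_f,\Z)$, and cellular homology gives the isomorphism directly.

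We then apply Lemma \ref{lem:length_est_log} to $G=G_f$ with $b=2g$ and $L=L(G_f)$. This yields $2g$ simple graph cycles $\alpha_1,\ldots,\alpha_{2g}$ whose classes form a $\Z$-basis of the image of $H_1(G_f,\Z)$, which by the previous step is all of $H_1(S,\Z)$. Their lengths satisfy
\[
\ell(\alpha_i)\le \frac{4L(G_f)}{2g-i+1}\log(2g-i+2)\le \frac{32\pi g}{(2g-i+1)\,\ell(\delta)}\log(2g-i+2).
\]
Simple graph cycles of the embedded graph $G_f$ are simple closed curves on $S$, which completes the proof.

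The only step needing care is the homology isomorphism for $G_f$, because Lemma \ref{lem:cut graph} is stated for $\pi_1$ and its proof only sketches generation. I would rely on the one-face argument: the complement of $G_f$ is a single disk, so $S$ is $G_f$ with one $2$-cell attached. The cellular boundary map from that $2$-cell to $C_1(G_f)$ counts each edge twice with opposite signs, once from each adjacent side of the single face, so it is zero. Hence $H_1(S)=H_1(G_f)$. This same observation also confirms $b_1(G_f)=2g$.
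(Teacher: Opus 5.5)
Your proposal is correct and follows the paper's proof: apply the tree--cotree lemma to $G_V$, bound $L(G_f)\le L(G_V)\le 8\pi g/\ell(\delta)$ via Proposition~\ref{prop:mixedlength}, deduce $H_1(G_f,\Z)\cong H_1(S,\Z)$, and apply Lemma~\ref{lem:length_est_log} with $b=2g$. Your justification of the homology isomorphism, either via surjectivity between free abelian groups of equal rank $2g$ or via the one-face cellular boundary argument, spells out what the paper asserts in one line.
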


Together with Proposition \ref{prop:mixedlength}, this proves Theorem \ref{thm:full_rank_intro} from the introduction.

\begin{proof}
By Lemma \ref{lem:cut graph}, the cut graph $G_f$ satisfies $b_1(G_f)=2g$ and $\pi_1(G_f)\twoheadrightarrow\pi_1(S)$. Hence
\[
H_1(G_f,\Z)\xrightarrow{\;\cong\;}H_1(S,\Z).
\]
Therefore we can apply Lemma \ref{lem:length_est_log} to $G_f$ with $b=2g$ using Proposition \ref{prop:mixedlength} for the length estimates. This proves our claim.
\end{proof}

\begin{remark}[The stratum $\mathcal{H}(2g-2)$]
If $S\in\cH(2g-2)$, then the Voronoi decomposition has one face and the full Voronoi graph is already a cut graph. Thus one may take $G_f=G_V$ and apply Lemma \ref{lem:length_est_log} directly. 
\end{remark}

\section*{Acknowledgments and disclosure}

This manuscript was developed from preliminary notes and subsequent mathematical discussion between the authors and OpenAI's ChatGPT. The system assisted with synthesis, organization, and editorial drafting. The authors take complete responsibility for the mathematical claims, the references, and any version submitted for circulation or publication.

\vspace{0.4cm}

\noindent Peter Buser \\
\noindent Department of Mathematics, Ecole Polytechnique F\'ed\'erale de Lausanne\\
\noindent Station 8, 1015 Lausanne, Switzerland\\
\noindent e-mail: \textit{peter.buser@epfl.ch}\\
\\
\noindent Achintya Dey \\
\noindent Department of Mathematics, Indian Institute of Technology Kanpur\\
\noindent Kanpur 208016, Uttar Pradesh, India\\
\noindent e-mail: \textit{achintd@iitk.ac.in}\\
\\
\noindent Eran Makover\\
\noindent Department of Mathematics, Central Connecticut State University\\
\noindent 1615 Stanley Street, New Britain, CT 06050, USA\\
\noindent e-mail: \textit{makovere@ccsu.edu}\\
\\
\noindent Bjoern Muetzel \\
\noindent Department of Mathematics, Eckerd College \\
\noindent 4200 54th avenue South, St. Petersburg, FL 33711, USA\\
\noindent e-mail: \textit{bjorn.mutzel@gmail.com}\\ 

\end{document}